\renewenvironment{abstract}{
  \small
  \noindent\textsc{Abstract.}\ } 
{\par\vspace{1em}}
\newcommand{\Z}{\mathbb{Z}}
\newcommand{\RelAt}{\mathbf{Rel}^@}
\newcommand{\hooklongrightarrow}{\lhook\joinrel\longrightarrow}
\newcommand{\Prop}{\mathsf{Prop}}
\newcommand{\Type}{\mathsf{Type}}
\theoremstyle{definition}
\newtheorem{definition}{Definition}[section]
\newtheorem{example}{Example}[section]
\theoremstyle{plain}
\newtheorem{proposition}{Proposition}[section]
\theoremstyle{remark}
\newtheorem{corollary}{Corollary}[section]
\title{Type Theory for the Working Mathematical Music Theorist}
\author{Drew Flieder\thanks{\, Email: \texttt{drew@drewflieder.xyz}}}
\date{}
\begin{document}
\maketitle

\begin{abstract}
Many formal languages of contemporary mathematical music theory—particularly those employing category theory—are powerful but cumbersome: ideas that are conceptually simple frequently require expression through elaborate categorical constructions such as functor categories. This paper proposes a remedy in the form of a type-theoretic symbolic language that enables mathematical music theorists to build and reason about musical structures more intuitively, without relinquishing the rigor of their categorical foundations. Type theory provides a syntax in which elements, functions, and relations can be expressed in simple terms, while categorical semantics supplies their mathemusical interpretation. Within this system, reasoning itself becomes constructive: propositions and proofs are treated as objects, yielding a framework in which the formation of structures and the reasoning about them take place within the same mathematical language. The result is a concise and flexible formalism that restores conceptual transparency to mathemusical thought and supports new applications, illustrated here through the theory of voice-leading spaces.
\end{abstract}

\tableofcontents

\section{Introduction and Motivation}

Mathematical music theory increasingly relies on sophisticated formal tools to express and reason about musical structures. Yet this sophistication comes at a cost. Many constructions that are conceptually simple demand technically elaborate encodings, and intuitive reasoning about musical ideas often becomes entangled with highly technical mathematical machinery. The goal of this paper is to bridge this gap by developing a symbolic framework that is both \emph{intuitive} and \emph{rigorous}—one that allows mathemusicians to build and reason about musical structures in a simpler, skeletal formal language while retaining access to full semantic precision when desired.

Two closely related motivations guide this work:
\begin{enumerate}
\item To introduce a \emph{symbolic lingua franca} for mathemusicians: a language that makes the construction of mathemusical objects transparent and intuitive without sacrificing formal strength.
\item To provide a unified system in which the formation of musical structures and the reasoning about them are intrinsic to the same language.
\end{enumerate}

\subsection{A Symbolic Lingua Franca for Mathematical Music Theorists}
\label{sub:a-symbolic-lingua-franca-for-mathematical-music-theorists}

Many of the most expressive categorical approaches to mathematical music theory—beginning with the presheaf formalisms developed by Mazzola~\cite{mazzola2018topos,mazzola2002topos} and developed further in more recent work~\cite{flieder2024towards},  as well as functorial approaches such as~\cite{noll2005topos,popoff2018relational}—demonstrate how powerful such tools can be. Yet they also reveal a recurring tension: ideas that are simple to state in naïve mathematical terms become highly encumbered when expressed in functorial language. For instance, in the category $\mathbf{Set}$, to ``choose an element'' $x \in X$ is simply to specify a map $x : 1 \to X$ from the singleton set. In a presheaf category, however, the analogous notion involves a natural transformation from the terminal presheaf $\mathbf{1}$ to a presheaf $\hat{X}$, with components indexed by every object and compatibility conditions imposed by naturality.

This abstraction is in many ways inevitable, since generality and structural precision often require more elaborate mediating machinery. Yet it can obscure the intuitive simplicity of the underlying ideas. In my own work, I have frequently spoken of an ``element'' of a presheaf as though it were a set-theoretic element, while strictly speaking it is a family of maps subject to coherence conditions (i.e., a natural transformation). The technical apparatus remains ``under the hood,'' but it is not always relevant to the conceptual work at hand.

Here is where type theory proves valuable. It provides a formal syntax that allows us to reason in familiar terms—types, elements, functions, and relations—without continually invoking the underlying categorical machinery. Much as one need not understand automotive engineering to drive a car, a mathemusician need not work directly with functors and natural transformations to reason about the ``element'' of a presheaf $X$ representing some mathemusical structure. Moreover, since syntax is independent of semantics, one can specify the \emph{form} of a construction before committing to any particular interpretation—a principle we will illustrate in several examples throughout this text. Different semantic models can then yield distinct musical meanings, each grounded in a shared symbolic core.\footnote{See Example~\ref{ex:interpretation-of-pitch-classes-and-intervals}, where we present the theory of a \emph{generalized interval system} following \cite{lewin2007generalized}.} In this way, type theory functions not merely as a lighter formalism, but as a truly unifying language, capable of mediating among diverse mathemusical contexts within a single expressive framework.

\subsection{Reasoning as Constructive}

A further virtue of type theory is that it unifies the constructive and logical dimensions of mathematical reasoning. Traditional approaches often treat these as separate layers: a formal system such as ZFC builds mathematics on top of first-order logic, while frameworks like Mazzola’s similarly distinguish the languages of forms, denotators, and predicates as occupying distinct ontological strata.\footnote{For Mazzola’s account of this layered architecture, see \cite[Section~1.2]{mazzola1997semiotics} and \cite[Chapters~6 and~18]{mazzola2002topos}.} Type theory, by contrast, integrates these roles. It brings together what might be called the \emph{constructive} side of mathematics (the building of objects) and the \emph{verificative} side (the formation of predicates that apply to those objects to yield propositions) within a single formal language. Moreover, verification itself becomes constructive: propositions and proofs are not merely external judgments about mathematical entities but new objects in their own right (see Appendix~\ref{sec:propositions-as-types} on the \emph{propositions-as-types} paradigm). This unification deepens the expressive capacity of mathemusical discourse by making reasoning itself intrinsically constructive.\footnote{Examples~\ref{ex:proof-all-interval} and \ref{ex:proof-domfunc-leading-tone} demonstrate the constructive nature of propositions, such as ``$x$ (a pitch-class set) is all-interval'' and ``The dominant of any key contains the leading tone of that key.''}

To illustrate the point concretely, consider a basic tonal fact: in harmonic minor, the V chord functions as a dominant, whereas in natural minor it does not, since without the leading tone there is no strong dominant tendency. The sole musical difference here is the leading tone, which has decisive implications for the inferential behavior of the harmony. In type-theoretic terms, this corresponds to the fact that certain propositions become derivable in the context where the scale contains a leading tone, and fail to be derivable when it does not. In a harmonic-minor context, we can derive that V is dominant; in a natural-minor context, we \emph{cannot}. The derivation of dominance is thus sensitive to the tonal context. 

This formal phenomenon mirrors how musicians actually think. Suppose one is composing a passage in minor and many of the melodic tones lie in natural minor. One knows that a dominant chord is not available in that context. To form an authentic cadence, one shifts to harmonic minor precisely in order to make available a dominant.

In the type-theoretic setting, this is literally expressible: one changes the context, and thereby changes which propositions are derivable. A proposition that did not hold (``V is dominant'') now becomes derivable. This is not a meta-linguistic explanation. It is a formal transition internal to the system.

\subsection{Structure of the Paper}
\label{sub:paper-structure}

The remainder of this paper is organized as follows. 

\begin{itemize}
    \item Section~\ref{sec:the-type-theoretic-machinery} introduces the technical foundations of type theory, including the notions of signatures, typing judgments, and classifying categories. To maintain contact with the intended music-theoretical aims, we intersperse these developments with elementary examples of typing judgments in musical contexts. We then introduce the principal type constructors, which enable the recursive synthesis of new types, and show how these give rise to a higher-order logic once a signature and a family of constructors are fixed. The section concludes with a brief discussion of the internal language of a category.
    
    \item Section~\ref{sec:categorical-semantics} presents the notion of categorical semantics, showing how the purely syntactic machinery of type theory may be interpreted in semantically rich environments. This interpretation endows syntactic constructions with mathemusical meaning, thereby linking the formal apparatus to conceptual content. The section closes with several illustrative mathemusical examples.
    
    \item Section~\ref{sec:an-application-to-the-theory-of-voice-leading-spaces} presents a more developed application of the framework to the theory of voice-leading spaces. While the details of this section presuppose the material of the preceding two, readers primarily interested in musical applications may wish to skim it first to gain a sense of the kinds of structures that become expressible once the theoretical foundations are in place.
    
    \item Section~\ref{sec:conclusion} offers concluding remarks and identifies directions for future work.
\end{itemize}

We also provide three appendices—\ref{sec:categorical-semantics-a-deeper-look}, \ref{sec:propositions-as-types}, and \ref{sec:w-types}—which furnish additional technical material: respectively, a deeper treatment of categorical semantics, an account of propositions as types, and an introduction to inductive type constructors through $\mathsf{W}$-types.

\section{The Type-Theoretic Machinery}
\label{sec:the-type-theoretic-machinery}

In this section we present the foundational components of type theory that will serve as the formal backbone of our framework. At first glance, type theory looks much like a naïve set theory: in place of sets, we work with \emph{types}, which classify terms much as sets classify elements. We also have \emph{functions} (mappings between types) and \emph{relations} (logical conditions that may hold between their terms). 

\subsection{Signatures}

To begin, we define what is called a \emph{signature} \cite[Section D.1]{johnstone2002sketches}: 

\begin{definition}[Signature]
A \emph{signature} $\Sigma$ is a triple 
\[ 
\Sigma = (\Sigma\normalfont{\text{-Type}}, \Sigma\normalfont{\text{-Fun}}, \Sigma\normalfont{\text{-Rel}}),\]
 where:
\begin{enumerate}
    \item $\Sigma\normalfont{\text{-Type}}$ is a set of \emph{types}.
    \item $\Sigma\normalfont{\text{-Fun}}$ is a set of \emph{function symbols}, written as
    \[ f : A_1, \ldots, A_n \longrightarrow B, \]
    where $A_1, \ldots, A_n$ is a list of types and $B$ is a type. The \emph{type} of $f$ is the list $A_1, \ldots, A_n, B$. The number $n$ is called the \emph{arity} of the function, and if $n = 0$, then $f$ is called a \emph{constant symbol}. 
    \item $\Sigma\normalfont{\text{-Rel}}$ is a set of \emph{relation symbols}, written as
    \[ R \hooklongrightarrow A_1, \ldots, A_n. \]
    The \emph{type} of $R$ is $A_1, \ldots, A_n$. The number $n$ is called the \emph{arity} of the relation, and if $n = 0$, then $R$ is called an \emph{atomic proposition}. The symbol $\hookrightarrow$ indicates that $R$ is a relation on the given types, with $R$ corresponding to a subobject of the Cartesian product $A_1 \times \cdots \times A_n$. This notation emphasizes the role of relations as subobjects of product objects in a category.
\end{enumerate}
\end{definition}

A signature therefore specifies the basic syntactic structure of a type-theoretic system. It specifies the basic types, function symbols, and relation symbols. The elements of $\Sigma\text{-Type}$ correspond to distinct kinds of objects, while the elements of $\Sigma\text{-Fun}$ and $\Sigma\text{-Rel}$ specify how objects of these types relate to one another through functions and relations, respectively. Additionally, we assume a set $V = \{ x, y, \ldots \}$ of variables, which will be utilized in the construction of terms and expressions in the system.

As we will see, the types we introduce here have a close correspondence with objects in a category, while function symbols correspond to morphisms between these objects. This categorical perspective will provide a bridge between the symbolic worlds of logic and the objective worlds of mathematical structure.

\begin{example}[Signature of a group]
\label{ex:signature-of-group}
In this example we show how the concept of a mathematical group can be expressed purely syntactically as a signature with associated equational identities.

A \emph{group} is a set $G$ equipped with a binary operation $\star : G \times G \to G$ satisfying the following axioms:
\begin{enumerate}
\item \textbf{Associativity:} For every $a,b,c \in G$,
\[
(a \star b) \star c = a \star (b \star c)
\]
\item \textbf{Identity element:} There exists a unique $e \in G$ such that for all $g \in G$, 
\[
g \star e = e \star g = g.
\]
\item \textbf{Inverses:} For every $g \in G$, there exists a unique $h \in G$ such that $g \star h = e$.
\end{enumerate}

We now express this structure at the level of a \emph{signature}:
\[
\mathbb{G} = (\mathbb{G}\text{-Type}, \mathbb{G}\text{-Fun}, \mathbb{G}\text{-Rel}),
\]
following \cite[Section~4.1]{flieder2024towards} and \cite[Chapter~4]{awodey2010category}. The signature consists of a single type $G$ and no relation symbols. It includes the function symbols
\[
\star : G \times G \to G, \quad e : \mathbf{1} \to G, \quad \mathrm{inv} : G \to G,
\]
corresponding respectively to the binary operation, the identity element, and the inverse operation.

A \emph{model} (see Section~\ref{sec:categorical-semantics}) of this signature is any interpretation of these function symbols in a category with finite products that satisfies the following equational identities, corresponding to the three axioms above. These equations are most naturally expressed as the commutativity of the following diagrams.

\begin{enumerate}
\item \textbf{Associativity.} The associativity of $\star$ is captured by the commutativity of:
\[
\begin{tikzcd}
	{(G \times G) \times G} && {G \times (G \times G)} \\
	\\
	{G \times G} && {G \times G} \\
	& G
	\arrow["\cong", from=1-1, to=1-3]
	\arrow["{\langle \star, \ \mathrm{id}_G\rangle}"', from=1-1, to=3-1]
	\arrow["{\langle \mathrm{id}_G, \ \star\rangle}", from=1-3, to=3-3]
	\arrow["\star"', from=3-1, to=4-2]
	\arrow["\star", from=3-3, to=4-2]
\end{tikzcd}
\]

\item \textbf{Identity.} That $e : \mathbf{1} \to G$ picks out the identity element is ensured by:
\[
\begin{tikzcd}
	G && {G \times G} \\
	\\
	{G \times G} && G
	\arrow["{\langle e, \ \mathrm{id}_G \rangle}", from=1-1, to=1-3]
	\arrow["{\langle \mathrm{id}_G, \ e \rangle}"', from=1-1, to=3-1]
	\arrow["\star", from=1-3, to=3-3]
	\arrow["\star"', from=3-1, to=3-3]
\end{tikzcd}
\]
Here we write $e$ for the composite $e! : G \xrightarrow{!} 1 \xrightarrow{e} G$.

\item \textbf{Inverses.} That $\mathrm{inv} : G \to G$ gives inverses with respect to $\star$ is ensured by:
\[\begin{tikzcd}
	{G \times G} && G && {G \times G} \\
	\\
	{G \times G} && G && {G \times G}
	\arrow["{\langle  \mathrm{id}_G, \; \mathrm{inv} \rangle}"', from=1-1, to=3-1]
	\arrow["{\langle \mathrm{id}_G, \; \mathrm{id}_G \rangle}"', from=1-3, to=1-1]
	\arrow["{\langle \mathrm{id}_G, \; \mathrm{id}_G \rangle}", from=1-3, to=1-5]
	\arrow["e", from=1-3, to=3-3]
	\arrow["{\langle \mathrm{inv}, \; \mathrm{id}_G \rangle}", from=1-5, to=3-5]
	\arrow["\star"', from=3-1, to=3-3]
	\arrow["\star", from=3-5, to=3-3]
\end{tikzcd}\]
\end{enumerate}

Thus the \emph{theory of groups} is presented by the signature $\mathbb{G}$ together with these equational identities. Later (see Example \ref{ex:interpretation-of-pitch-classes-and-intervals}), we will see how this group signature serves as one of the ingredients in Lewin’s definition of a \emph{generalized interval system} \cite{lewin2007generalized}.
\end{example}

\subsection{Terms, Contexts, and Typing Judgments}
\label{sub:terms-contexts-and-typing-judgments}

In mathematical music theory, we often wish not merely to refer to mathematical structures—such as scales, chords, or voice-leading spaces—but to specify the \emph{conditions} under which their elements may be constructed. For example, if we have a structure $\mathsf{IVLS}$ representing musical intervals, we may ask under what context an interval can be formed. Given two pitch classes $x$ and $y$, we can construct an interval by taking their difference $y - x$, so we can understand the terms of $\mathsf{IVLS}$ as depending on pairs of pitch classes. In general, the construction of a term of a certain type depends on the availability of terms from other types. We therefore introduce the notions of \emph{terms}, \emph{contexts}, and \emph{typing judgments}, which provide the basic machinery for forming \emph{judgments} in type theory.\footnote{See \cite[Chapter~2]{jacobs1999categorical} for a more detailed treatment.}

For a signature $\Sigma$, the types it specifies correspond to \emph{contexts}, while its function symbols correspond to \emph{terms}, as we will soon see.

\subsubsection{Terms}
\label{subsub:terms}

We begin with the notion of a \emph{term}. For a type $A$, the expression $x : A$ asserts that $x$ is a \emph{term of type} $A$. This plays a role analogous to membership in set theory: just as $x \in A$ states that $x$ is an element of the set $A$, so $x : A$ states that $x$ is a term of the type $A$. However, because a signature specifies only types, function symbols, and relation symbols—not ``elements'' in the set-theoretic sense—we require a systematic way to speak about terms constructed from these symbols. As we will see, terms are closely associated with function symbols, though this relationship will be made precise as our development proceeds. 

\subsubsection{Contexts}
\label{subsub:contexts}

In order to reason effectively about how terms depend on one another, we must keep track of the assumptions under which they are constructed. This is the role of a \emph{context}. A context is an ordered sequence of variable declarations,
\[
\Gamma = x_1 : A_1, \ldots, x_n : A_n,
\]
where each $x_i$ is a variable of type $A_i$. Intuitively, a context specifies the assumptions under which a judgment is made, recording the variables currently in scope and their associated types. 

Contexts may be concatenated: given $\Gamma$ as above and another context 
\[
\Delta = y_1 : B_1, \ldots, y_m : B_m,
\]
their concatenation is written
\[
\Gamma, \Delta = x_1 : A_1, \ldots, x_n : A_n, y_1 : B_1, \ldots, y_m : B_m.
\]

There are also situations in which we may wish to substitute terms for the variables in a context. In a context $\overrightarrow{x} = x_1 : A_1, \ldots, x_n : A_n$, we may wish to substitute terms $a_i : A_i$ for the variables $x_i : A_i$. If $\overrightarrow{a} = a_1, \ldots, a_n$ is a list of terms of the same length and type as $\overrightarrow{x}$, we write $[\overrightarrow{a} / \overrightarrow{x}]$ for the operation that substitutes $a_i$ for $x_i$ for each $i \leq n$. This process allows us to \emph{instantiate} the variables within a term with concrete terms.

\subsubsection{Typing Judgments}
\label{subsub:typing-judgments}

With contexts in place, we can now form \emph{typing judgments}. A typing judgment is an expression of the form
\[
\Gamma \;\vdash\; a : A
\]
which states that, relative to the assumptions recorded in the context $\Gamma$, the term $a$ has type $A$. In other words, it asserts that a term has a given type, always relative to a context $\Gamma$.

For example, in the empty context $\Gamma = ()$, a typing judgment
\[
() \;\vdash\; a : A
\]
indicates that $a$ is a constant term of type $A$. Such a constant symbol corresponds to a function of arity 0, analogous to a set function $1 \to A$ where $1$ is a singleton set. This perspective allows us to view constants as ``global'' elements of $A$—terms that require no assumptions to construct.

By contrast, using our interval type $\mathsf{IVLS}$ introduced above, we can form an interval $i : \mathsf{IVLS}$ given pitch classes $x$ and $y$ of some pitch type $\mathsf{Pitch}$. This dependency is expressed by the typing judgment
\[
x : \mathsf{Pitch}, \; y : \mathsf{Pitch} \;\vdash\; i : \mathsf{IVLS}.
\]
Here the context records the assumptions required to construct an interval: namely, the availability of two pitch classes from which the interval may be formed (see Example~\ref{ex:constructing-intervals} for a more developed account).

\subsection{The Classifying Category of a Signature}
\label{sub:the-classifying-category-of-a-signature}

With the machinery of terms, contexts, and typing judgments in place, we now turn to the morphisms they induce, which will allow us to define what is called the \emph{classifying category} of a signature. 

Given a context 
\[
\Gamma = x_1 : A_1, \ldots, x_n : A_n
\]
and a typing judgment 
\[
\Gamma \;\vdash\; t_1 : T_1, \ldots, t_m : T_m,
\]
such a typing judgment corresponds to a morphism
\[
A_1, \ldots, A_n \xrightarrow{(t_1, \ldots, t_m)} T_1, \ldots, T_m.
\]
This arises by viewing the tuple $(t_1, \ldots, t_m)$ as a map from the product $A_1 \times \cdots \times A_n$ to the product $T_1 \times \cdots \times T_m$. Each component $t_i$ defines a morphism
\[
t_i : A_1 \times \cdots \times A_n \longrightarrow T_i,
\]
and together they map each tuple $\overrightarrow{a} = (a_1, \ldots, a_n)$ from the product $A_1 \times \cdots \times A_n$ to the tuple
\[
(t_1(\overrightarrow{a}), \ldots, t_m(\overrightarrow{a}))
\]
in $T_1 \times \cdots \times T_m$. 

This construction forms the foundation for the notion of a \emph{classifying category}.

\begin{definition}[Classifying Category]
For a signature $\Sigma$, its \emph{classifying category} $\mathcal{C}\ell(\Sigma)$ is defined as follows: its objects are contexts, and its morphisms $\Gamma \to \Delta$, where $\Delta = y_1 : B_1, \ldots, y_m : B_m$, are $m$-tuples of terms $(t_1, \ldots, t_m)$ such that $\Gamma \;\vdash\; t_i : B_i$ for each $i \leq m$.\footnote{See \cite[124]{jacobs1999categorical}.}
\end{definition}
 
\subsection{Musical Examples of Typing Judgments}
\label{sub:musical-examples-of-typing-judgments}

We now present musical examples of typing judgments.

\subsubsection{Typing Judgments for Data Types}
\label{subsub:typing-judgments-for-data-types}

In what follows, we display typing judgments which construct terms of basic musical data types.

\begin{example}[Constructing intervals]  
\label{ex:constructing-intervals}
Earlier (Section \ref{subsub:typing-judgments}) we introduced the typing judgment 
\[
x : \mathsf{Pitch}, \; y : \mathsf{Pitch} \; \vdash \; i : \mathsf{IVLS}
\]
which expresses that an interval can be formed from two terms of a pitch type $\mathsf{Pitch}$.

In the classifying category, this interval is given by a morphism  
\[
\begin{matrix}
i : & \mathsf{Pitch}, \mathsf{Pitch} & \longrightarrow & \mathsf{IVLS} \\
& (x, y) & \longmapsto & y - x,
\end{matrix}
\]  
which assigns to each pair of pitch classes their difference. Thus, the context provides the \emph{input data} (two pitch classes), and the typing judgment asserts that from this data one can construct a corresponding interval $i(x, y)$.

By contrast, we may also form a typing judgment in the \emph{empty context}, e.g.  
\[
() \;\vdash\; i : \mathsf{IVLS},
\]  
which simply specifies an interval independently of any assumptions. Categorically, this  corresponds to a \emph{global element} $i : () \to \mathsf{IVLS}$, where $()$ is the terminal object.  
\end{example}

Already we begin to see what the type-theoretic system affords. It allows us to start constructing a language without yet committing to any particular kind of object that will interpret our constructions. In the preceding example, we did not specify what object should serve as the interpretation of the abstract type~$\mathsf{Pitch}$; we only know that it will eventually stand for pitches of some kind. At this stage, however, we have not decided whether $\mathsf{Pitch}$ is to be realized as a set in~$\mathbf{Set}$, as a module presheaf in Mazzola’s category $\mathbf{Mod}^@$, or as a presheaf in $\mathbf{Rel}^@$, to name a few possibilities. Nor have we fixed the underlying universe of pitches: $\mathsf{Pitch}$ might be interpreted as the universe $\mathbb{Z}$ of equal-tempered pitches, where $0$ denotes middle-C, $-1$ middle-B, and so on; or as the twelve-tone universe $\mathbb{Z}_{12}$; or as some other collection.

What we \emph{have} provided, at this point, is a rule for constructing terms of the type of pitch intervals, independently of how the abstract types are ultimately interpreted. Given two pitches (or pitch classes) $x$ and $y$, we may form their interval $y - x$ (supposing subtraction is well-defined).

Let us consider another example.

\begin{example}[Building chords]
Suppose we have a type $\mathsf{Chord}$ of chords. We can define a rule for constructing a term $c : \mathsf{Chord}$ as follows. Let $\mathsf{DiatonicScale}$ be the type of major and minor diatonic scales, and $\mathsf{ScaleDegree}$ the type of scale degrees (1 through 7). Given a scale $s : \mathsf{DiatonicScale}$ and a degree $d : \mathsf{ScaleDegree}$, we may construct the triad in the scale $s$ rooted at degree $d$:
\[
s : \mathsf{DiatonicScale}, \; d : \mathsf{ScaleDegree} \;\vdash\; \mathsf{triad}(s,d) : \mathsf{Chord}.
\]

Once again this corresponds to a morphism in the classifying category,
\[
\begin{matrix}
\mathsf{triad} : & \mathsf{DiatonicScale}, \mathsf{ScaleDegree} & \longrightarrow & \mathsf{Chord} \\
& (s, d) & \longmapsto & \{\, s_d,\; s_{d+2},\; s_{d+4} \,\},
\end{matrix}
\]
which sends a scale and a root degree to the chord formed by stacking every other note beginning at $s_d$.
\end{example}

\subsubsection{Typing Judgments for Propositions}
\label{subsub:typing-judgments-for-propositions}

We now turn to the role of relation symbols and propositions. For our purposes, we assume that a signature $\Sigma$ contains a distinguished type $\mathsf{Prop}$, representing propositions (i.e.\ truth values). This allows us to regard relation symbols as functions into $\mathsf{Prop}$. Specifically, any relation symbol 
\[
R \hooklongrightarrow A_1, \ldots, A_n
\]
can equivalently be represented as a function symbol 
\[
R : A_1, \ldots, A_n \longrightarrow \mathsf{Prop}.
\]
Thus terms of type $\mathsf{Prop}$ are to be understood as \emph{propositional functions}. A proposition $R$ in context $x_1 : A_1, \ldots, x_n : A_n$ then corresponds to a typing judgment
\[
x_1 : A_1, \ldots, x_n : A_n \; \vdash \; R(x_1, \ldots, x_n) : \mathsf{Prop},
\]
that is, a rule for constructing a proposition given the assumptions collected in the context.

This perspective highlights the equivalence between monomorphisms into $A_1 \times \cdots \times A_n$ and propositional functions on $A_1 \times \cdots \times A_n$. Morphisms $T \to \mathsf{Prop}$ are in one-to-one correspondence with relations on $T$. For example, if $\mathsf{Prop}$ contains the two truth values $\top$ and $\bot$, then for a relation $P$ to be true of $t : T$ is to have $P(t) = \top$, and the collection of all $t$ for which $P(t) = \top$ is precisely the relation $P \hookrightarrow T$.

\begin{example}[Scale membership]
\label{ex:scale-membership}
Let $\mathsf{PC}$ be the type of pitch classes and $\mathsf{Scale}$ the type of scales. The relation ``$p$ belongs to $s$'' can be formalized as a propositional function
\[
\in \; \colon \mathsf{PC}, \mathsf{Scale} \longrightarrow \mathsf{Prop}.
\]
In the context $p : \mathsf{PC},\; s : \mathsf{Scale}$ we obtain the typing judgment
\[
p : \mathsf{PC},\; s : \mathsf{Scale} \;\vdash\; (p \in s) : \mathsf{Prop},
\]
which expresses that ``$p$ is a member of $s$'' is a well-formed proposition relative to the given context.
\end{example}

\begin{example}[Functional harmony]
\label{ex:functional-harmony}
Let $\mathsf{Chord}$ be the type of chords and $\mathsf{Key}$ the type of keys. The statement ``$c$ is the dominant in $k$'' can be formalized as a propositional function
\[
\mathsf{dom} : \mathsf{Chord}, \mathsf{Key} \longrightarrow \mathsf{Prop}.
\]
Thus, in the context $c : \mathsf{Chord},\; k : \mathsf{Key}$ we obtain the typing judgment
\[
c : \mathsf{Chord},\; k : \mathsf{Key} \;\vdash\; \mathsf{dom}(c,k) : \mathsf{Prop},
\]
expressing that ``$c$ is the dominant in $k$'' is a proposition whose truth depends on the chord and the key supplied by the context.
\end{example}

\begin{example}[Dominance depends on context]
\label{ex:dominance-context-dependent}

Let $\mathsf{NoteName}$ be the type of note names (e.g.\ C$\sharp$), and let $\mathsf{sctype} : \mathsf{NoteName} \to \mathsf{Scale}$ assign to each note name a seven-note scale of a given kind (e.g.\ harmonic minor, natural minor).

Fix a context
\[
\Gamma \;=\; n:\mathsf{NoteName},\; \mathsf{sctype}(n) : \mathsf{Scale},
\]
and assume that each scale $\mathsf{sctype}(n)$ carries a scale-degree function
\[
\mathsf{scdeg}_{\mathsf{sctype}(n)} : \mathsf{sctype}(n) \longrightarrow \{ \hat{1}, \hat{2}, \ldots, \hat{7} \}.
\]

We define, in this context, the proposition
\[
\Gamma \;\vdash\; \mathsf{containsLeadingTone}(\mathsf{sctype}(n)) : \mathsf{Prop}
\]
to hold when the pitch of scale degree $\hat{7}$ lies a semitone below the pitch of scale degree $\hat{1}$.

Now consider two specifications of $\Gamma$:
\[
\Gamma_{\mathsf{harm}} \;=\; n:\mathsf{NoteName},\;  \mathsf{sctype}(n) := \mathsf{harm}(n)
\]
and
\[
\Gamma_{\mathsf{nat}} \;=\; n:\mathsf{NoteName},\;  \mathsf{sctype}(n) := \mathsf{nat}(n)
\]
where $\mathsf{harm}(n)$ and $\mathsf{nat}(n)$ denote the harmonic resp.\ natural minor built on $n$.

We state:
\[
\mathsf{dominant}(V(\mathsf{sctype}(n))) \;\iff \; \mathsf{containsLeadingTone}(\mathsf{sctype}(n)).
\]
This expresses the musical fact that the $V$ chord of $\mathsf{sctype}(n)$ is dominant
if and only if that scale contains the leading tone.

Then in harmonic minor we have a derivation---viz.\ for any substitution of the terms in the context $\Gamma_\mathsf{harm}$, the proposition asserting the dominance of $V$ is derivable:
\[
\Gamma_{\mathsf{harm}} \;\vdash\; \mathsf{dominant}(V(\mathsf{sctype}(n))) = \mathsf{true}.
\]
By contrast, in natural minor we do not:
\[
\Gamma_{\mathsf{nat}} \;\not\vdash\; \mathsf{dominant}(V(\mathsf{sctype}(n))) =\mathsf{true}.
\]

Thus the truth of the proposition ``$V$ is dominant'' is context-dependent: it is derivable in $\Gamma_\mathsf{harm}$ and not derivable in $\Gamma_\mathsf{nat}$.
\end{example}

As the preceding examples make clear, reasoning in type theory always occurs within a specific context $\Gamma$. The context defines the types—the ``universe of discourse''—over which propositions are formed. For instance, in the context $c : \mathsf{Chord},\; k : \mathsf{Key}$, the proposition that ``$c$ is the dominant of $k$'' is well-formed, since the types of $c$ and $k$ are specified. By contrast, the same proposition without a context, yielding
\[
() \;\vdash\; \mathsf{dom}(c,k) : \mathsf{Prop},
\]
is ill-formed. Thus, when constructing compound propositions using logical connectives, the context $\Gamma$ must always supply the types of all free variables on which the propositional function depends.

\subsection{Type Constructors}
\label{sub:type-constructors}

Thus far we have introduced signatures, explained how typing judgments are generated from them, shown how the classifying category of a signature emerges from this machinery, and illustrated these ideas with preliminary mathemusical examples.

Yet a proposal for a lingua franca of mathematical concept-building would remain incomplete without the capacity to express the universal constructions that pervade mathematics. These include, for instance, products, coproducts, function types, and subobject types. Introducing such constructions allows us to extend a signature by enabling the recursive synthesis of new types through \emph{type constructors}.

In type theory, such constructions are specified by \emph{formation rules}. These are written as inference rules with a horizontal bar: the judgments above the bar are the \emph{premises}, and the judgment below the bar is the \emph{conclusion}. 

From now on, we abbreviate typing judgments of the form
\[
() \;\vdash\; a : A
\] 
in the empty context as simply $a : A$.

Before introducing the type constructors, we first note two particularly simple types:\begin{itemize}
\item The \emph{null type} (or empty type), usually written $\mathbf{0}$, contains no terms.  
\item The \emph{unit type}, written $\mathbf{1}$, contains exactly one canonical term $* : \mathbf{1}$.  In logical terms, these correspond respectively to falsity and truth, and in set-theoretic terms to the empty set and a singleton set.
\end{itemize}

We begin with the familiar cases of product, coproduct, and function types, encouraging the reader to think of them in terms of sets (Cartesian products, disjoint unions, and hom-sets). We then proceed to dependent products and dependent sums, which are less familiar. For reasons of space and complexity, we defer the presentation of inductive types to Appendix \ref{sec:w-types}.

Type theory distinguishes among:
\begin{itemize}
\item \emph{Type formation rules}, which specify how to form a type from other types.
\item \emph{Term introduction rules}, which specify how to construct a term of a given type.
\item \emph{Term elimination rules}, which specify how to use a term of a given type.
\end{itemize}

In what follows our emphasis will be on formation rules. Introduction and elimination rules will sometimes be omitted, as they can become technically involved and risk obscuring our primary motivation. (For a fuller treatment, see, for instance, \cite[Chapter 1 and Appendix A]{hott2013}.) To illustrate the relevance of each type constructor to musical thought, we will also provide a brief mathemusical example of each.

\subsubsection{Product Types}

Given types $A$ and $B$, we can form their product type:
\[
\frac{A : \Type \quad B : \Type}{A \times B : \Type}
\]
Here, the premise judgments are $A : \Type$ and $B : \Type$, and the conclusion is the judgment $A \times B : \Type$.  The term introduction rule for products is:
\[
\frac{a : A \quad b : B}{(a, b) : A \times B}
\]
The corresponding elimination rules are the projections:
\[
\frac{p : A \times B}{\mathsf{pr}_1(p) : A}
\qquad
\frac{p : A \times B}{\mathsf{pr}_2(p) : B}
\]

\begin{example}[MIDI note]
Suppose we wish to define a type $\mathsf{MidiNote}$ that encapsulates the essential data of a MIDI note: its onset, pitch, velocity, and duration. Given types $\mathsf{Onset}$, $\mathsf{Pitch}$, $\mathsf{Velocity}$, and $\mathsf{Duration}$ corresponding to each of these components, we can form the product type
\[
\mathsf{MidiNote} \coloneqq \mathsf{Onset} \times \mathsf{Pitch} \times \mathsf{Velocity} \times \mathsf{Duration}.
\]
A term $(o, p, v, d) : \mathsf{MidiNote}$ then consists of an onset, a pitch, a velocity, and a duration.\end{example}

\subsubsection{Coproduct Types}

Given types $A$ and $B$, we can form their coproduct type:
\[
\frac{A : \Type \quad B : \Type}{A + B : \Type}
\]
The term introduction rules for coproducts are:
\[
\frac{a : A}{\mathsf{inl}(a) : A + B} 
\qquad 
\frac{b : B}{\mathsf{inr}(b) : A + B}
\]
These state that given $a : A$ (respectively $b : B$), we may construct a term of $A + B$ by applying the canonical left (resp. right) injection.

\begin{example}[Pitch or rest]
Suppose we have a type $\mathsf{Pitch}$ and a singleton type $\mathsf{Rest}$ whose unique term denotes a rest. We can then define the coproduct
\[
\mathsf{Event} \coloneqq \mathsf{Pitch} + \mathsf{Rest}.
\]
This type may be used, for example, to form an $n$-fold product representing an $n$-voice musical texture. Each harmonic event can then be conceived as a term
\[
(x_1, \ldots, x_n) : \mathsf{Event}^n
\]
in which some voices produce pitches while others rest.
\end{example}

\subsubsection{Function Types}

Given types $A$ and $B$, we can form the type of functions from $A$ to $B$:
\[
\frac{A : \Type \quad B : \Type}{(A \to B) : \Type}
\]
The corresponding elimination rule is application:
\[
\frac{f : A \to B \quad a : A}{f(a) : B}
\]
That is, given a function $f : A \to B$ and an argument $a : A$, we may apply $f$ to $a$ to obtain a term of type $B$.

The special case of a function type with codomain $\Prop$ illustrates the connection with logic.  If $p : X \to \Prop$, then $p$ is a propositional function over $X$, and term elimination corresponds to applying the propositional function to a subject:  
\[
\frac{p : X \to \Prop \quad x : X}{p(x) : \Prop}
\]
In other words, given a propositional function $p$ on $X$ and a term $x : X$, application yields the proposition $p(x)$.

\begin{example}[Sequences as functions]
Suppose we have a type $[n]$ representing the natural numbers from $1$ to $n$ inclusive, and another type $\mathsf{Param}$ representing some musical parameter (e.g., pitch, duration, velocity, etc.). The function type
\[
\mathsf{Seq}(n, \mathsf{Param}) \coloneqq [n] \to \mathsf{Param}
\]
then represents the type of $n$-length sequences whose elements are drawn from $\mathsf{Param}$.
\end{example}

\subsubsection{Dependent Product Types}

Dependent types, as the name suggests, allow the formation of types that vary with terms of another type. The first kind we introduce is the \emph{dependent product type}, also called the \emph{dependent function type} or simply the $\Pi$-\emph{type}.

Suppose we have $B : A \to \Type$, which in set-theoretic terms corresponds to an indexed family of sets, assigning to each $a : A$ a type $B(a)$. The $\Pi$-type is then the Cartesian product
\[
\prod_{x : A} B(x)
\]
of this indexed family of types. 

It is called a dependent function type because it represents functions whose codomain varies with the input. The ordinary function type is the special case where $B$ is constant. 

The formation rule for the dependent product type is:
\[
\frac{A : \Type \quad x : A \;\vdash\; B : \Type}{\prod_{x : A} B(x) : \Type}
\]

The elimination rule is application:
\[
\frac{f : \prod_{x : A} B(x) \quad a : A}{f(a) : B(a)}.
\]

\begin{example}[Tuning pitch classes]
The tuning theory developed in \cite{flieder2025lifting} provides a natural example of dependent function types. There, the basis of the theory is a group homomorphism
\[
\varphi : \mathbb{Z}_{n} \longrightarrow \mathbb{Z}_{m}
\]
defined by $\varphi(x) = kx$, where $k = m/n$. This map represents a perfectly symmetric embedding of the $n$-note pitch-class universe into the $m$-tone equally tempered scale.

To model ``distortions'' of this symmetry—that is, tunings that preserve the general structure but allow individual pitch classes to deviate slightly—we allow each pitch class $x : \mathbb{Z}_n$ to be tuned to any value within the range
\[
[\varphi(x), \varphi(x) + k - 1] \subset \mathbb{Z}_m.
\]
This situation is naturally captured by a dependent function type, since the codomain associated with each term $x : \mathbb{Z}_n$ depends on $x$ itself. We can express this dependency with the judgment
\[
x : \mathbb{Z}_n \;\vdash\; \delta^* : \mathsf{Type}
\]
where $\delta^*(x) = [\varphi(x), \varphi(x) + k - 1]$. This gives rise to the dependent function type
\[
\prod_{x : \mathbb{Z}_n} \delta^*(x).
\]
A term $t : \prod_{x : \mathbb{Z}_n} \delta^*(x)$ is then a specific tuning of $\mathbb{Z}_n$ within $\mathbb{Z}_m$, assigning to each pitch class $x$ a concrete tuned value within its allowable range.
\end{example}

\subsubsection{Dependent Sum Types}

A \emph{dependent sum type}, \emph{dependent pair type}, or simply $\Sigma$-\emph{type} forms pairs where the second component depends on the value of the first. It corresponds to the set-theoretic coproduct of sets indexed by a given set.\footnote{See \cite[30]{hott2013}.}

Specifically, given an indexed family $B : A \to \Type$ of types, the $\Sigma$-type is the indexed sum
\[
\sum_{x : A} B(x)
\]
but which we may think of as the type of pairs $(x, b)$ where $b : B(x)$. 

The formation rule for the dependent sum type is:
\[
\frac{A : \Type \quad x : A \;\vdash\; B : \Type}{\sum_{x : A} B(x) : \Type}
\]

\begin{example}[Transporters]
\label{ex:transporters}
Suppose we have a type $\mathsf{PC}$ of pitch classes, a type $S$, and a left $S$-action on $\mathsf{PC}$, given by a function
\[
\sigma : S \times \mathsf{PC} \longrightarrow \mathsf{PC}.
\]

We can define a dependent type
\[
x : \mathsf{PC},\; y : \mathsf{PC},\; \sigma : (S \times \mathsf{PC} \to \mathsf{PC}) \;\vdash\; \mathsf{transp} : \mathsf{Type}
\]
such that $\mathsf{transp}(x, y, \sigma)$ consists of all terms $s : S$ that \emph{transport} $x$ to $y$—that is, those $s$ satisfying
\[
\sigma(s, x) = y.
\]

The dependent pair type
\[
\sum_{(x, y, \sigma) : \mathsf{PC} \times \mathsf{PC} \times (S \times \mathsf{PC} \to \mathsf{PC})} \mathsf{transp}(x, y, \sigma)
\]
then consists of terms $\left((x, y, \sigma), s\right)$ such that $\sigma(s, x) = y$. In other words, a term of this type packages together the data of a source pitch class $x$, a target pitch class $y$, an $S$-action $\sigma$, and a specific element $s : S$ that transports $x$ to $y$ under $\sigma$.

(For a concrete application, see Example~\ref{ex:objectve-transformations-ti-action}. For a more elaborate use of dependent pairs, see \cite[Section 5.5.2]{flieder2025meta}, where they are used to define so-called ``self-similar networks.'')
\end{example}

\subsubsection{$\mathsf{W}$-Types}
\label{subsub:W-types}

Finally, we mention one further and particularly important class of type constructors: the $\mathsf{W}$-types. These are \emph{inductive types}, whose terms can be understood as well-founded trees determined by a specified branching structure. Intuitively, a $\mathsf{W}$-type provides a means of defining types whose elements are built inductively from a finite amount of data together with recursively defined substructures. $\mathsf{W}$-types thus underlie the definition of many familiar recursively generated data types—such as natural numbers, lists, and trees—that arise naturally in both mathematical and musical contexts (see Example~\ref{ex:rhythm-trees} for a mathemusical application to rhythm trees).

Because their construction involves additional categorical and semantic machinery, we therefore present a fuller treatment in Appendix~\ref{sec:w-types}.

\subsection{Higher-Order Formulae over a Signature}
\label{sub:higher-order-formulae}

Having introduced type constructors, we can now extend any signature $\Sigma$ by allowing these constructors to freely generate new types from the basic ones specified by $\Sigma\text{-Type}$.

A \emph{higher-order language} over~$\Sigma$ then consists of a set $\mathcal{L}(\Sigma)$ of formulae, defined recursively by the following clauses.\footnote{See \cite[Sections~D1.1 and~D4.1]{johnstone2002sketches}.}

\begin{enumerate}
  \item \emph{Relations (atomic):}  
  If $R \hookrightarrow A_1, \ldots, A_n$ is an $n$-ary relation symbol and $t_i : A_i$ for $1 \leq i \leq n$, then  
  \[
  R(t_1, \ldots, t_n) \in \mathcal{L}(\Sigma).
  \]

  \item \emph{Identity (atomic):}  
  If $s, t : A$, then  
  \[
  (s =_A t) \in \mathcal{L}(\Sigma).
  \]

  \item \emph{Membership (atomic):}  
  If $t : A$ and $P : A \to \mathsf{Prop}$, then  
  \[
  P(t) \in \mathcal{L}(\Sigma)
  \]
  (equivalently, we may write $t \in_A P$).

  \item \emph{Truth:}  
  \[
  \top \in \mathcal{L}(\Sigma).
  \]

  \item \emph{Falsehood:}  
  \[
  \bot \in \mathcal{L}(\Sigma).
  \]

  \item \emph{Conjunction:}  
  If $\phi, \psi \in \mathcal{L}(\Sigma)$, then  
  \[
  \phi \land \psi \in \mathcal{L}(\Sigma).
  \]

  \item \emph{Disjunction:}  
  If $\phi, \psi \in \mathcal{L}(\Sigma)$, then  
  \[
  \phi \lor \psi \in \mathcal{L}(\Sigma).
  \]

  \item \emph{Implication:}  
  If $\phi, \psi \in \mathcal{L}(\Sigma)$, then  
  \[
  \phi \Rightarrow \psi \in \mathcal{L}(\Sigma).
  \]

  \item \emph{Negation:}  
  If $\phi \in \mathcal{L}(\Sigma)$, then  
  \[
  \neg \phi \in \mathcal{L}(\Sigma).
  \]

  \item \emph{Universal quantification:}  
  If $\phi \in \mathcal{L}(\Sigma)$ and $x$ is a variable of type $\tau$, then  
  \[
  \forall x : \tau.\, \phi \in \mathcal{L}(\Sigma).
  \]

  \item \emph{Existential quantification:}  
  If $\phi \in \mathcal{L}(\Sigma)$ and $x$ is a variable of type $\tau$, then  
  \[
  \exists x : \tau.\, \phi \in \mathcal{L}(\Sigma).
  \]
\end{enumerate}

Variables may range over any type generated from the signature (e.g.\ basic types $A$, function types $A \to B$, dependent types $\prod_{x : A}B(x)$, etc.), which is what makes the language \emph{higher-order}. We use the usual notions of free and bound variables: quantifiers bind occurrences of the indicated (typed) variable.

\subsection{The Internal Language of a Category}
\label{sub:the-internal-language-of-a-category}

We have thus seen some initial glimpses of the power of type theory for purposes of general mathemusical reasoning. In Section~\ref{sub:a-symbolic-lingua-franca-for-mathematical-music-theorists} we observed that many of the most expressive categorical approaches to mathematical music theory employ functor categories whose internal mechanics are often highly complex. Type theory, by contrast, offers a way to perform mathematical constructions more directly, allowing us to reason and build using the simple syntactic machinery of terms, types, functions, relations, and type constructors.  

A category with suitable structure—and the musical frameworks listed in Section~\ref{sub:a-symbolic-lingua-franca-for-mathematical-music-theorists} indeed possess such structure—can itself be interpreted as a type theory by regarding the objects of the category as \emph{types}, the morphisms as \emph{function symbols}, and the subobjects $R \hookrightarrow A$ as \emph{relation symbols}. This correspondence is known as the \emph{internal language} of a category.\footnote{For the internal–external distinction, see \cite[Chapter 7]{goldblatt1984topoi} and \cite[Chapter V]{maclane2012sheaves}.}

Thus, when working with presheaves over a category $\mathcal{C}$, writing an ``element'' of a presheaf $X$ in $[\mathcal{C}^{\mathrm{op}}, \mathbf{Set}]$ as $x \in X$ constitutes an invocation of the internal language of $[\mathcal{C}^{\mathrm{op}}, \mathbf{Set}]$. This typically corresponds to forming a typing judgment $x : X$, indicating that $x$ is a term of type $X$, although there is a special case in which the membership relation may instead be conceived propositionally (see Example~\ref{ex:interpretation-of-scale-membership}).

\section{Categorical Semantics}
\label{sec:categorical-semantics}

Thus far we have introduced some of the core foundations of type theory. The reader may have noticed, however, that we have made no reference to any structure possessed by the types themselves. A signature, after all, specifies only the basic building blocks for constructing a purely formal language, whose expressions are syntactic rather than semantic.

We now turn to the question of how to endow these abstract symbolic types with meaning. This process is called providing an \emph{interpretation} or \emph{model} of the type system. Concretely, it consists in associating the abstract types with actual mathematical objects, and doing so in a consistent way: for example, ensuring that a product of types corresponds to a product of objects in the interpreting category.

For a general treatment of interpretations in categorical logic, see \cite{jacobs1999categorical,johnstone2002sketches}. For our purposes, we will restrict our attention to interpretations in toposes, as they provide interpretations of higher-order logic.  To achieve this, we introduce the concept of a $\Sigma $-\emph{structure}.\footnote{See \cite[Section D1.2]{johnstone2002sketches} for the following definition.}

\begin{definition}[$\Sigma$-Structure]
\label{def:sigma-structure}
Let $\Sigma$ be a signature and $\mathcal{E}$ a topos.  
A $\Sigma$-\emph{structure} in $\mathcal{E}$ is an assignment $M$ that interprets each type, function symbol, and relation symbol of $\Sigma$ as an object or morphism of $\mathcal{E}$ in a way that is compatible with the type constructors, as follows:
\begin{enumerate}
    \item For each type $A \in \Sigma\text{-Type}$, $M$ assigns an object $M(A) \in \mathcal{E}$.  
This assignment must respect the structure of type constructors; for example,
\[
M(A \to B) = M(B)^{M(A)}, \qquad 
M(A \times B) = M(A) \times M(B), \qquad 
\]
\[
M(\mathbf{1}) = 1, \qquad 
M(\mathsf{Prop}) = \Omega, \qquad \text{etc.}
\]

    \item For each function symbol $f : A \to B$ in $\Sigma\text{-Fun}$, $M$ assigns a morphism
    \[
    M(f) : M(A) \longrightarrow M(B).
    \]
    
    \item For each relation symbol $R \hookrightarrow A_1, \ldots, A_n$ in $\Sigma\text{-Rel}$, $M$ assigns a subobject
    \[
    M(R) \hooklongrightarrow M(A_1, \ldots, A_n),
    \]
    which, by the universal property of the subobject classifier, corresponds to a morphism
    \[
    M(R) : M(A_1, \ldots, A_n) \longrightarrow \Omega.
    \]
\end{enumerate}

The $\Sigma$-structures in $\mathcal{E}$ form a category
$\Sigma\text{-}\mathbf{Str}(\mathcal{E})$ whose morphisms (``$\Sigma$-structure
homomorphisms'') $h : M \to N$ consist of a family of maps $h_A : M(A) \to N(A)$ for each $A \in \Sigma\text{-Type}$ which are compatible with the type constructors, and such that:
\begin{enumerate}
\item (Functions commute) For every function symbol $f: A \to B$ in
$\Sigma\text{-Fun}$, the square
\[\begin{tikzcd}
	{M(A)} && {M(B)} \\
	\\
	{N(A)} && {N(B)}
	\arrow["{M(f)}", from=1-1, to=1-3]
	\arrow["{h_A}"', from=1-1, to=3-1]
	\arrow["{h_B}", from=1-3, to=3-3]
	\arrow["{N(f)}"', from=3-1, to=3-3]
\end{tikzcd}\]
commutes.

\item (Relations are preserved) For every relation symbol
$R \hookrightarrow A_1,\dots,A_n$ in $\Sigma\text{-Rel}$, there exists a unique arrow $h_R : M(R) \to N(R)$ making
\[\begin{tikzcd}
	{M(R)} && {M(A_1, \ldots, A_n)} \\
	\\
	{N(R)} && {N(A_1, \ldots, A_n)}
	\arrow[hook, from=1-1, to=1-3]
	\arrow["{h_R}"', from=1-1, to=3-1]
	\arrow["{h_{A_1} \; \times \; \cdots \; \times h_{A_n}}", from=1-3, to=3-3]
	\arrow[hook, from=3-1, to=3-3]
\end{tikzcd}\]
commute.
\end{enumerate}
\end{definition}

Now that we have formalized the notion of a $\Sigma$-structure, we can relate it to the classifying category. Given a signature $\Sigma$, recall (Section \ref{sub:the-classifying-category-of-a-signature}) that we constructed its classifying category $\mathcal{C}\ell(\Sigma)$. A $\Sigma$-structure is precisely a functor
\[
M : \mathcal{C}\ell(\Sigma) \longrightarrow \mathcal{E}.
\]
This functor interprets the objects (contexts) and morphisms (typing judgments) of $\mathcal{C}\ell(\Sigma)$ in the topos $\mathcal{E}$, thereby providing a semantics for the syntactic expressions determined by the signature $\Sigma$.

We denote the interpretation of a type, function symbol, or relation symbol $X$ by $M$ as either $M(X)$, $\llbracket X \rrbracket_M$, or simply $\llbracket X \rrbracket$ whenever $M$ is clear from context.

For reasons of space, we defer the categorical semantics of the logical connectives and quantifiers to Appendix~\ref{sec:categorical-semantics-a-deeper-look}. For the present, we conclude this section with examples of how some of the purely type-theoretic constructions introduced in Sections~\ref{subsub:typing-judgments-for-data-types} and~\ref{subsub:typing-judgments-for-propositions} may be interpreted semantically.

\begin{example}[Interpretation of pitch classes and intervals]
\label{ex:interpretation-of-pitch-classes-and-intervals}
In Example~\ref{ex:constructing-intervals} we introduced a type $\mathsf{Pitch}$ of pitches and a type $\mathsf{IVLS}$ of intervals, showing how to construct an interval $i : \mathsf{IVLS}$ from pitches $x, y : \mathsf{Pitch}$. Suppose now that these types belong to a signature $\Sigma$, and consider an interpretation into the category of sets:
\[
M : \mathcal{C}\ell(\Sigma) \longrightarrow \mathbf{Set}.
\]
We may set $\llbracket \mathsf{Pitch} \rrbracket = \Z_{12}$, the standard set-theoretic representation of the 12 pitch classes, and likewise $\llbracket \mathsf{IVLS} \rrbracket = \Z_{12}$, the set of 12 pitch-class intervals. This yields the familiar set-theoretic interpretation of the interval function:
\[
\begin{matrix}
\llbracket i \rrbracket = \mathrm{int} : & \Z_{12} \times \Z_{12} & \longrightarrow & \Z_{12} \\
& (x, y) & \longmapsto & y - x.
\end{matrix}
\]

To enrich these sets with additional structure, we may instead interpret the pitch and interval types in $\RelAt$. For example, by interpreting $\mathsf{IVLS}$ as a group and imposing the relevant axioms, we recover Lewin's Definition~2.3.1 from \cite{lewin2007generalized} of a \emph{generalized interval system} (GIS). 

Another way to define a GIS is to specify it directly as a theory~$\mathbb{T}$ over a given signature~$\Sigma$—that is, as a set of formulae in~$\mathcal{L}(\Sigma)$ serving as the axioms of the theory (see~\cite[Section D1.1]{johnstone2002sketches}). Once the theory is fixed, its models automatically satisfy the structural requirements of a GIS. This perspective illustrates the power of the type-theoretic approach: the axioms of a GIS are not imposed externally but are expressed intrinsically within the theory itself, and models of the theory are functors that incarnate the structural requirements specified by the axioms in a concrete setting: namely, as objects and morphisms in the codomain of the functor.

Let us therefore define the theory~$\mathbb{GIS}$ of a generalized interval system.  
Following~\cite[26]{lewin2007generalized}, a GIS is defined as a triple  
\[
(S, \mathsf{IVLS}, \mathsf{int})
\]
where $S$ is the \emph{space} of the system (a set of elements), $\mathsf{IVLS}$ is the \emph{group of intervals} (a group), and $\mathsf{int} : S \times S \to \mathsf{IVLS}$ is a function satisfying two axioms:
\begin{enumerate}
\item For every $r, s, t : S$, we have $\mathsf{int}(r, s) \star \mathsf{int}(s, t) = \mathsf{int}(r, t)$. 
\item For every $s : S$ and every $i : \mathsf{IVLS}$, there is a unique $t : S$ such that $\mathsf{int}(s, t) = i$. 
\end{enumerate}

This can be formalized as a theory as follows. The basic types of its signature $\Sigma$  are $S$ and $\mathsf{IVLS}$. The type~$S$ carries no additional structure, whereas~$\mathsf{IVLS}$ is endowed with the structure of a group, which can be expressed in terms of the equational identities discussed in Example~\ref{ex:signature-of-group}.

The next essential step is to encode conditions (1) and (2). Both are expressible as commutativity conditions on specific diagrams. Condition (1)—the composition property—requires that the following diagram commute:
\[\begin{tikzcd}
	{S \times S \times S} &&&& {\mathsf{IVLS} \times \mathsf{IVLS}} \\
	\\
	\\
	{S \times S} &&&& {\mathsf{IVLS}}
	\arrow["{\langle \mathsf{int} \ \circ \ \mathsf{pr}_{1,2}, \; \mathsf{int} \ \circ \ \mathsf{pr}_{2, 3} \rangle}", from=1-1, to=1-5]
	\arrow["{\mathsf{pr}_{1,3}}"', from=1-1, to=4-1]
	\arrow["\star", from=1-5, to=4-5]
	\arrow["{\mathsf{int}}"', from=4-1, to=4-5]
\end{tikzcd}\]
Here $\mathsf{pr}_{1,2}$, $\mathsf{pr}_{2,3}$, and $\mathsf{pr}_{1,3}$ denote the evident projections from $S \times S \times S$. This expresses precisely the requirement that for all $r, s, t : S$,
\[
\mathsf{int}(r, s) \star \mathsf{int}(s, t) = \mathsf{int}(r, t).
\]

Condition (2)—existence and uniqueness—is expressed by specifying a function 
\[
\begin{matrix}
u : & S \times \mathsf{IVLS} & \longrightarrow & S \\
& (s, i) & \longmapsto & t
\end{matrix}
\]
such that the following diagram commutes:
\[\begin{tikzcd}
	{S \times \mathsf{IVLS}} && {S \times S} \\
	\\
	&& {\mathsf{IVLS}}
	\arrow["{\langle \mathsf{pr}_1, \; u \rangle}", from=1-1, to=1-3]
	\arrow["{\mathsf{pr}_2}"', from=1-1, to=3-3]
	\arrow["{\mathrm{int}}", from=1-3, to=3-3]
\end{tikzcd}\]
This expresses the requirement that for each $s : S$ and $i : \mathsf{IVLS}$ there is a unique $t$ such that $\mathsf{int}(s,t) = i$.

Thus, defining the theory $\mathbb{GIS}$ this way ensures that every model of the theory \emph{is} a generalized interval system. Interpreting the signature in $\mathbf{Set}$ recovers the familiar set-theoretic conception, while interpreting it in a category whose objects carry additional structure makes it possible to define new kinds of generalized interval systems. 
\end{example}

\begin{example}[Interpretation of scale membership]
\label{ex:interpretation-of-scale-membership}
In Example~\ref{ex:scale-membership} we introduced a type $\mathsf{PC}$ of pitch classes and a type $\mathsf{Scale}$ of scales, together with a membership relation $(p \in s)$ in the context $p : \mathsf{PC}, \; s : \mathsf{Scale}$, i.e.
\[
p : \mathsf{PC}, \; s : \mathsf{Scale} \;\vdash\; (p \in s) : \mathsf{Prop}.
\]

For the membership relation to be meaningful, the terms of $\mathsf{Scale}$ must in some way be built from terms of $\mathsf{PC}$. A natural approach is to regard scales as sets of pitch classes, corresponding to propositional functions $s : \mathsf{PC} \to \mathsf{Prop}$, where $s(x)$ holds precisely when $x$ belongs to the scale. Thus we define
\[
\mathsf{Scale} \coloneqq \mathsf{PC} \longrightarrow \mathsf{Prop}.
\]

With this definition, the membership typing judgment
\[
x : \mathsf{PC}, \; s : \mathsf{PC} \to \mathsf{Prop} \;\vdash\; (x \in s) : \mathsf{Prop}
\]
corresponds to the evaluation map
\[
\begin{matrix}
\mathsf{eval} : & (\mathsf{PC} \to \mathsf{Prop}) \times \mathsf{PC} & \longrightarrow & \mathsf{Prop} \\
& (s, x) & \longmapsto & s(x).
\end{matrix}
\] 

Notice therefore that type theory affords two different notions of ``membership.''
\begin{itemize}  
\item \emph{Quantifier-bounding membership} is expressed by typing judgments like $x : X$, which correspond to morphisms $() \to X$ in the classifying category. This is not a proposition, but a declaration that $x$ is given as a term of $X$.  
\item \emph{Propositional membership} is expressed by a typing judgment of the form
\[
x : X, \; A : X \to \mathsf{Prop} \;\vdash\; x \in A : \mathsf{Prop},
\]
which is capable of being true or false.\footnote{The terms ``quantifer-bounding membership'' and ``propositional membership'' are from \cite{shulman2013set}.}
\end{itemize}

In a presheaf category $[\mathcal{C}^{\mathrm{op}}, \mathbf{Set}]$, the notation $ x  \in  A $ where $ A $ is a presheaf can therefore be understood in two ways:
\begin{enumerate}
\item as a typing judgment $() \; \vdash \; x : A$, i.e.\ a morphism $ x  : \mathbf{1} \to  A $; or
\item if $A$ is a propositional function $A : X \to \Omega$, then $x \in A$ denotes a membership relation between presheaves $ X $ and $\Omega^{ X }$, i.e.\ a morphism 
\[
 \in \;  \colon  X  \times \Omega^{ X } \longrightarrow \Omega.
\]
\end{enumerate}
\end{example}

This suffices for an initial overview of categorical semantics. For a fuller treatment, see Appendix~\ref{sec:categorical-semantics-a-deeper-look}. 

\section{An Application to the Theory of Voice-Leading Spaces}
\label{sec:an-application-to-the-theory-of-voice-leading-spaces}

In this section, we present a nontrivial application of the type-theoretic machinery to the general formulation of voice-leading spaces, with the aim of demonstrating its expressive power. The approach adopted here is agnostic with respect to the nature of the transformations between voices and therefore affords a high degree of generality. Tymoczko \cite{tymoczko2025concept} has argued  that transformational theory, insofar as it models transformations between musical objects as group actions on sets, is inherently limited. Such an approach cannot capture certain transformations—most notably homotopy classes of paths in a topological space—which Tymoczko regards as central to musical thought. As an alternative, he suggests modeling voice-leading spaces by groupoids, in which the objects are musical entities such as chords and the morphisms are homotopy-equivalence classes of paths between them.

At the same time, there remain situations in which more traditional transformational networks are of interest, where the objects are set-like and the relations between them function-like.

The framework we develop here accommodates both approaches, and more. By adopting the type-theoretic formulation we sidestep unnecessary ontological disputes, as the framework does not commit us to any restricted class of objects, but instead specifies the syntactic form required for a construction to qualify as a voice-leading space. The formulation therefore provides a high-level specification of the concept of a voice-leading space, delineating the minimal conditions under which an object may be recognized as such.

\subsection{The Type of Voice-Leading Spaces}

In this section, we work toward formulating the type of voice-leading spaces. We proceed in phases. Suppose first that we have a type $\mathsf{Pitch}$ of pitched objects. For instance, these could be frequencies, pitch classes, $n$-tuples of pitch classes, and so forth. These will serve as the objects between which voice leadings occur. 

The first step in defining a voice-leading space is to specify a \emph{voice-leading rule}, namely a function
\[
\mathsf{vlr} : \mathsf{Pitch} \times \mathsf{Pitch} \longrightarrow \mathsf{Type}
\]
that assigns to each pair $(x, y) : \mathsf{Pitch} \times \mathsf{Pitch}$ the type of \emph{ways of getting from $x$ to $y$}. 

We may think of each way of getting from $x$ to $y$ as an arrow $x \to y$, so that a voice-leading space has the structure of a quiver (a directed graph allowing multiple arrows between any two vertices). A \emph{quiver} is defined formally as a quadruple $(A, V, s, t)$ where $A$ is a type of arrows, $V$ a type of vertices, and 
\[
s, t : A \longrightarrow V
\]
are the source and target maps. 

A voice-leading space will therefore be a kind of quiver, and hence a term of a quiver type. To define such a type, we must briefly examine some details concerning type universes. Thus far we have spoken informally, writing expressions such as $A : \mathsf{Type}$ to indicate that $A$ is a type. Of course, if $\mathsf{Type} : \mathsf{Type}$ without restriction, we would encounter Russell's paradox. Instead, what is meant is that $\mathsf{Type}$ refers to a universe of types up to a certain level in a hierarchy of universes. Formally, we posit a cumulative hierarchy of universes
\[
\mathsf{Type}_0 : \mathsf{Type}_1 : \mathsf{Type}_2 : \cdots
\]
where each universe $\mathsf{Type}_i$ is itself a term of $\mathsf{Type}_{i+1}$, and whenever $A : \mathsf{Type}_i$ it also follows that $A : \mathsf{Type}_{i+1}$.\footnote{See \cite[Section 1.3]{hott2013}.} By the convention of \emph{typical ambiguity} (see \cite[p.\ 24]{hott2013}), however, we may simply write $\mathsf{Type} : \mathsf{Type}$, to be understood as shorthand for $\mathsf{Type}_i : \mathsf{Type}_{i+1}$. 

With this in place, we can now define the type of quivers.  A quiver is a quadruple $(A, V, s, t)$, where $A$ and $V$ are arbitrary types and $s, t : A \to V$.  Accordingly, we define the type of quivers as the doubly dependent sum:
\[
\mathsf{Quiv} \;\coloneqq\; \sum_{A : \mathsf{Type}} \sum_{V : \mathsf{Type}} (A \to V) \times (A \to V).
\]

We now show how to construct voice-leading spaces as terms of $\mathsf{Quiv}$. 

\begin{definition}[Voice-Leading Space]
Let $\mathsf{Pitch}$ be a pitch type and 
\[
\mathsf{vlr} : \mathsf{Pitch} \times \mathsf{Pitch} \longrightarrow \mathsf{Type}
\]
a voice-leading rule, i.e.\ a function assigning to each pair $(x, y)$ of pitch terms the type of \emph{ways of getting from $x$ to $y$}. The \emph{voice-leading space} over $(\mathsf{Pitch}, \mathsf{vlr})$ is the quiver 
\[
\mathsf{vls}(\mathsf{Pitch}, \mathsf{vlr}) = \left( \sum_{(x, y) : \mathsf{Pitch} \times \mathsf{Pitch}} \mathsf{vlr}(x, y), \; \mathsf{Pitch}, \; \mathsf{pr}_1 \circ \mathsf{pr}_1, \; \mathsf{pr}_2 \circ \mathsf{pr}_1 \right).
\]
That is, the source map 
\[
\begin{matrix}
\mathsf{pr}_1 \circ \mathsf{pr}_1 : & \sum\limits_{(x, y) : \mathsf{Pitch} \times \mathsf{Pitch}} \mathsf{vlr}(x, y) & \longrightarrow & \mathsf{Pitch} \\
& \left( (x, y), t \right) & \longmapsto & x
\end{matrix}
\]
takes $t$ (a way of getting from $x$ to $y$) to $x$, while the target map $\mathsf{pr}_2 \circ \mathsf{pr}_1$ sends $t$ to $y$. 

Thus, 
\[
\mathsf{vls}(\mathsf{Pitch}, \mathsf{vlr}) : \mathsf{Quiv}.
\]
\end{definition}

By definition, constructing a voice-leading space depends on the data $\mathsf{Pitch}$ and $\mathsf{vlr}$, which are terms. We therefore seek a context for the typing judgment
\[
\mathsf{vls}(\mathsf{Pitch}, \mathsf{vlr}) : \mathsf{Quiv}.
\]

Let $\mathsf{PitchType}$ be a type of pitch types, so that $\mathsf{Pitch} : \mathsf{PitchType}$. To each $P : \mathsf{PitchType}$ we associate the type of voice-leading rules on $P$, given by
\[
\begin{matrix}
\mathsf{vlrules} : & \mathsf{PitchType} & \longrightarrow & \mathsf{Type} \\
& P & \longmapsto & \left((P \times P) \to \mathsf{Type}\right),
\end{matrix}
\]
where a term of type $\left((P \times P) \to \mathsf{Type}\right)$ is a function assigning to each pair $(x, y) : P \times P$ the type of all ways of getting from $x$ to $y$.

Hence, the dependent pair type
\[
\sum_{P : \mathsf{PitchType}} \mathsf{vlrules}(P)
\]
is the type of pairs $(P, V_P)$ where $P$ is a pitch type and $V_P$ a voice-leading rule for $P$. 

As we saw above, the voice-leading space for such a pair $(P, V_P)$ is $\mathsf{vlspace}(P, V_P)$. This yields the general procedure for constructing a voice-leading space:
\[
(p, v_p) : \sum_{P : \mathsf{PitchType}} \mathsf{vlrules}(P) \;\vdash\; \mathsf{vls}(p, v_p) : \mathsf{Quiv}.
\]
That is, given a pitch type $p$ and a voice-leading rule $v_p$ on $p$, the construction $\mathsf{vls}(p, v_p)$ yields the quiver encoding its voice-leading space. 

Equivalently, in the classifying category this corresponds to the morphism
\[
\mathsf{vls} : \sum_{P : \mathsf{PitchType}} \mathsf{vlrules}(P) \longrightarrow \mathsf{Quiv}.
\]

\subsection{Semantics for Voice-Leading Spaces}

Observe that the construction of a voice-leading space has thus far been purely syntactic, remaining agnostic with respect to the specific structures carried by the pitch types and voice-leading rules. We now present semantically interpreted examples, in which the syntactic constructions are modeled within a category possessing sufficient structure—for instance, the presheaf topos $\RelAt$~\cite{flieder2024towards}, which supports the kinds of structural information required for our purposes. For clarity, we stipulate the relevant structures \emph{by fiat}, rather than invoking the full presheaf construction, so as to keep the focus on the underlying conceptual ideas.

\begin{example}[Voice-leadings as group actions]
\label{ex:voice-leadings-as-group-actions}
Consider a voice-leading space 
\[
\mathsf{vls}(\mathsf{Pitch}, \mathsf{vlr}),
\]
which we first define syntactically. Suppose a type $G$ acts on $\mathsf{Pitch}$ by
\[
\varphi : G \times \mathsf{Pitch} \longrightarrow \mathsf{Pitch}, \qquad \varphi(g, x) = gx.
\]

We then define the voice-leading rule
\[
\mathsf{vlr} : \mathsf{Pitch} \times \mathsf{Pitch} \longrightarrow \mathsf{Type}
\]
that sends each pair $(x, y) : \mathsf{Pitch} \times \mathsf{Pitch}$ to the subtype of $G$ consisting of those $g$ with $gx = y$. We may call this the type of \emph{transporters} from $x$ to $y$, as discussed in Example \ref{ex:transporters}. 

For example, let $\llbracket \mathsf{Pitch} \rrbracket = \Z_{12}$ be the set of twelve pitch classes and let $\llbracket G \rrbracket = T/I$ be the group of transpositions and inversions acting on $\Z_{12}$. Under this interpretation, the resulting voice-leading space is the quiver whose vertices are the pitch classes in $\Z_{12}$ and whose arrows are the transposition and inversion operators.
\end{example}

\begin{example}[Voice-leadings as homotopy equivalence classes]
As a contrasting case, suppose we interpret 
\[
\llbracket \mathsf{Pitch} \rrbracket = S^1
\]
as the unit circle, representing continuous pitch classes topologically. Let $\Pi(S^1)$ denote the fundamental groupoid of $S^1$, whose objects are the points of $S^1$ and whose morphisms are homotopy-equivalence classes of paths between them. We write $\mathrm{Hom}_{\Pi(S^1)}(x, y)$ for the set of homotopy-equivalence classes of paths from $x$ to $y$ in~$S^1$. 

The voice-leading rule is then interpreted as the morphism
\[
\begin{matrix}
\llbracket \mathsf{vlr} \rrbracket : & S^1 \times S^1 & \longrightarrow & \mathrm{Hom}_{\Pi(S^1)}(-, -) \\
& (x, y) & \longmapsto & \mathrm{Hom}_{\Pi(S^1)}(x, y)
\end{matrix}
\]
assigning to each pair $(x, y) \in S^1$ the set of homotopy-equivalence classes of paths from $x$ to $y$.

The resulting voice-leading space is a quiver whose vertices are the points $x \in S^1$ and whose arrows are homotopy-equivalence classes of paths between them.

More generally, if $\mathsf{Pitch}$ is interpreted as any topological space whose points represent pitched objects, the corresponding voice-leading space is obtained in the same way, with arrows given by homotopy-equivalence classes of paths. Thus, the topological voice-leading spaces central to Tymoczko’s work\footnote{See, for example, \cite{tymoczko2010geometry,tymoczko2023tonality}.} are naturally encompassed within our framework.
\end{example}

\subsection{Subjective and Objective Transformations}

In this section we seek to explicate the notions of \emph{subjective} and \emph{objective} transformations in the context of voice-leading spaces, following \cite{tymoczko2025concept}. Subjective transformations correspond to transformations \emph{within} a voice-leading space, whereas objective transformations correspond to transformations \emph{of} the space itself. An objective transformation is therefore a self-map of a voice-leading space that preserves its structure, so as to maintain the information encoded therein. To make this precise, we first recall the definition of a quiver homomorphism.  

A \emph{quiver homomorphism}
\[
\Gamma \coloneq (\gamma_1, \gamma_0) : (A, V, s, t) \longrightarrow (A', V', s', t')
\] 
consists of functions on arrows and vertices such that the diagram
\[\begin{tikzcd}
	A && {A'} \\
	\\
	V && {V'}
	\arrow["{\gamma_1}", from=1-1, to=1-3]
	\arrow["s"', shift right=2, from=1-1, to=3-1]
	\arrow["t", shift left=2, from=1-1, to=3-1]
	\arrow["{s'}"', shift right=2, from=1-3, to=3-3]
	\arrow["{t'}", shift left=2, from=1-3, to=3-3]
	\arrow["{\gamma_0}"', from=3-1, to=3-3]
\end{tikzcd}\]
commutes; that is, $\gamma_1$ respects source and target with respect to $\gamma_0$.  

A quiver isomorphism is a quiver homomorphism in which $\gamma_1$ and $\gamma_0$ are isomorphisms. An automorphism is an isomorphism from a quiver to itself.  

With these preliminaries in place, we can now define subjective and objective transformations.
 
\begin{definition}[Subjective and Objective Transformations]
\label{def:subjective-and-objective-transformations}
Let $\mathsf{vls}(P, V_P) : \mathsf{Quiv}$ be the voice-leading space
\[
\mathsf{vls}(P, V_P) = \left( \sum_{(x, y) : P \times P} V_P(x, y), \; P, \; \mathsf{pr}_1 \circ \mathsf{pr}_1, \; \mathsf{pr}_2 \circ \mathsf{pr}_1 \right). 
\]
We distinguish two kinds of transformations:
\begin{enumerate}
\item A \emph{subjective transformation in} $\mathsf{vls}(P, V_P)$ is an arrow of the quiver, i.e.
\[
\alpha : \sum_{(x, y) : P \times P} V_P(x, y).
\] 
\item An \emph{objective transformation on} $\mathsf{vls}(P, V_P)$ is a quiver automorphism
\[
\Gamma : \mathsf{vls}(P, V_P) \xlongrightarrow{\cong} \mathsf{vls}(P, V_P).
\]
\end{enumerate}
\end{definition}

\begin{example}[Objective transformations of the $T/I$-action groupoid]
\label{ex:objectve-transformations-ti-action}
In Example~\ref{ex:voice-leadings-as-group-actions} we described the $T/I$ group’s action on $\Z_{12}$ as a voice-leading space whose vertices are the elements of $\Z_{12}$ and whose arrows are the elements of $T/I$. This is precisely the \emph{action groupoid} $\Z_{12} \sslash (T/I)$, a category (hence a quiver) modeling the action of a group on a set. 

Every group element $\phi \in T/I$ induces a quiver automorphism $\Phi = (\phi_1, \phi_0)$ defined on vertices by $\phi_0(x) = \phi(x)$ and on arrows $x \xrightarrow{g} y$ by conjugation,
\[
\phi_1(g) = \phi g \phi^{-1}.
\]

Let $\mathrm{Transp}_{T/I}(x, y)$ denote the transporter set of elements $g \in T/I$ with $g(x) = y$. Then the arrow object of this voice-leading space is
\[
V_{\Z_{12}} \;=\; \sum_{(x, y) \in \Z_{12} \times \Z_{12}} \mathrm{Transp}_{T/I}(x, y).
\]

Thus an objective transformation induced by $\phi$ is represented by a quiver automorphism. In other words, it maps subjects (vertices) to subjects, and subjective transformations (arrows) to subjective transformations, preserving the structure of the voice-leading space:
\[\begin{tikzcd}
	{\left( (x, y), \; g \right)} && {\left( (\phi x, \phi y), \; \phi g \phi^{-1} \right)} \\
	{V_{\Z_{12}}} && {V_{\Z_{12}}} \\
	\\
	\\
	{\Z_{12}} && {\Z_{12}} \\
	a && {\phi a}
	\arrow[maps to, from=1-1, to=1-3]
	\arrow["{\phi_1}", from=2-1, to=2-3]
	\arrow["s"', shift right=2, from=2-1, to=5-1]
	\arrow["t", shift left=2, from=2-1, to=5-1]
	\arrow["s"', shift right=2, from=2-3, to=5-3]
	\arrow["t", shift left=2, from=2-3, to=5-3]
	\arrow["{\phi_0}"', from=5-1, to=5-3]
	\arrow[maps to, from=6-1, to=6-3]
\end{tikzcd}\]
\end{example}

\subsection{An Alternative Approach: The Signature of a Voice-Leading Space}

The constructions above define voice-leading spaces as terms of a type $\mathsf{Quiv}$, given by a pair $(\mathsf{Pitch}, \mathsf{vlr})$ consisting of a pitch type and a voice-leading rule, together with the operation $\mathsf{vls}$, which constructs the corresponding quiver. Alternatively, the same concept may be described by introducing a signature $\Sigma_{\mathsf{VLS}}$ whose symbols specify the data of a voice-leading space. A $\Sigma_{\mathsf{VLS}}$-structure in a topos $\mathcal{E}$ is then an interpretation of this signature in $\mathcal{E}$, and thus corresponds, via the constructor $\mathsf{vls}$, to an actual voice-leading space internal to $\mathcal{E}$. Morphisms between such structures correspond to voice-leading space homomorphisms, while automorphisms of a $\Sigma_{\mathsf{VLS}}$-structure coincide with the \emph{objective transformations} defined above.

\paragraph{The signature.}
The signature $\Sigma_{\mathsf{VLS}}$ consists of two basic types:
\begin{enumerate}
\item A type $\mathsf{Pitch}$ of pitch data.
\item A type $\mathsf{Arrow}$ of arrow data.
\end{enumerate}
From these, we form the compound type $\mathsf{Pitch} \times \mathsf{Pitch}$ and the power object $P\mathsf{Arrow}$, whose elements correspond to subtypes of $\mathsf{Arrow}$—that is, to subobjects of arrows.\footnote{Throughout the text, such subobjects are denoted by the function type $\mathsf{Arrow} \to \mathsf{Prop}$, reflecting the equivalence between propositional functions and subobjects.}

The voice-leading rule is represented by a function symbol
\[
\mathsf{vlr} : \mathsf{Pitch} \times \mathsf{Pitch} \longrightarrow P\mathsf{Arrow},
\]
assigning to each ordered pair $(x,y)$ of pitches the subtype of $\mathsf{Arrow}$ consisting of the ways of moving from $x$ to $y$.

\paragraph{Definable quiver.}
From this minimal signature, a quiver object can be defined internally to $\mathcal{E}$.  For any $\Sigma_{\mathsf{VLS}}$-structure $M : \mathcal{C}\ell(\Sigma_{\mathsf{VLS}}) \to \mathcal{E}$, the corresponding voice-leading space $\mathsf{vls}(M)$ is the quiver
\[\begin{tikzcd}
	{\sum\limits_{(x, y) \; \in \; \llbracket \mathsf{Pitch} \rrbracket_M \; \times \; \llbracket \mathsf{Pitch} \rrbracket_M }  \llbracket \mathsf{vlr} \rrbracket_M(x,y)} &&& {\llbracket \mathsf{Pitch} \rrbracket_M}
	\arrow["{\mathrm{pr}_1 \; \circ \; \mathrm{pr}_1}", shift left=2, from=1-1, to=1-4]
	\arrow["{\mathrm{pr}_2 \; \circ \; \mathrm{pr}_1}"', shift right=2, from=1-1, to=1-4]
\end{tikzcd}\]
internal to $\mathcal{E}$.

\begin{proposition}
For a category $\Sigma_\mathsf{VLS}\text{-}\mathbf{Str}(\mathcal{E})$ of $\Sigma_\mathsf{VLS}$-structures, an \emph{objective transformation} of a voice-leading space is induced by an automorphism $h : M \to N$ in $\Sigma_\mathsf{VLS}\text{-}\mathbf{Str}(\mathcal{E})$. 
\end{proposition}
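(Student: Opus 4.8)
The plan is to show that the assignment $M \mapsto \mathsf{vls}(M)$ extends to a functor from $\Sigma_{\mathsf{VLS}}\text{-}\mathbf{Str}(\mathcal{E})$ to the category of internal quivers in $\mathcal{E}$ (with quiver homomorphisms as morphisms). Since functors preserve isomorphisms, an automorphism $h$ of a $\Sigma_{\mathsf{VLS}}$-structure $M$ is then carried to a quiver automorphism $\mathsf{vls}(h) : \mathsf{vls}(M) \xrightarrow{\cong} \mathsf{vls}(M)$, which is precisely an objective transformation in the sense of Definition~\ref{def:subjective-and-objective-transformations}. (More generally, an isomorphism $h : M \to N$ of $\Sigma_{\mathsf{VLS}}$-structures yields a quiver isomorphism $\mathsf{vls}(M) \cong \mathsf{vls}(N)$.)

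First I would construct, for a homomorphism $h : M \to N$, the candidate quiver homomorphism $\mathsf{vls}(h) = (\gamma_1, \gamma_0)$: on vertices set $\gamma_0 := h_{\mathsf{Pitch}}$, and on arrows let $\gamma_1$ be the restriction of $(h_{\mathsf{Pitch}} \times h_{\mathsf{Pitch}}) \times h_{\mathsf{Arrow}}$ to the subobject
\[
\sum_{(x,y)} \llbracket \mathsf{vlr} \rrbracket_M(x,y) \;\hookrightarrow\; \llbracket \mathsf{Pitch} \rrbracket_M \times \llbracket \mathsf{Pitch} \rrbracket_M \times \llbracket \mathsf{Arrow} \rrbracket_M.
\]
The crux is to verify that $\gamma_1$ actually lands in the arrow object of $\mathsf{vls}(N)$, i.e.\ that $h$ preserves the membership predicate cutting out the fibres: $a \in \llbracket \mathsf{vlr} \rrbracket_M(x,y)$ must imply $h_{\mathsf{Arrow}}(a) \in \llbracket \mathsf{vlr} \rrbracket_N(h_{\mathsf{Pitch}}\,x,\, h_{\mathsf{Pitch}}\,y)$. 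This follows from the commuting square for the function symbol $\mathsf{vlr}$ in the definition of a $\Sigma$-structure homomorphism together with the way the induced map $h_{P\mathsf{Arrow}}$ on the power object interacts with membership. In the case relevant to the proposition, $h_{\mathsf{Arrow}}$ is invertible, the induced map on $P\mathsf{Arrow}$ is the direct-image map along $h_{\mathsf{Arrow}}$ (equivalently, pullback along $h_{\mathsf{Arrow}}^{-1}$), and the square reads $\llbracket \mathsf{vlr} \rrbracket_N(h\,x,\,h\,y) = h_{\mathsf{Arrow}}\!\left[ \llbracket \mathsf{vlr} \rrbracket_M(x,y) \right]$; hence membership is preserved in both directions, so $\gamma_1$ is simultaneously well-defined and an isomorphism onto the arrow object of $\mathsf{vls}(N)$.

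Next I would check the quiver-homomorphism condition, namely that $\gamma_1$ respects source and target relative to $\gamma_0$. Since the source and target maps of $\mathsf{vls}(M)$ are $\mathrm{pr}_1 \circ \mathrm{pr}_1$ and $\mathrm{pr}_2 \circ \mathrm{pr}_1$, and $\gamma_1$ acts through $h_{\mathsf{Pitch}}$ on the two $\mathsf{Pitch}$-coordinates, both defining squares commute by construction. Functoriality ($\mathsf{vls}(\mathrm{id}_M) = \mathrm{id}_{\mathsf{vls}(M)}$ and $\mathsf{vls}(h \circ k) = \mathsf{vls}(h) \circ \mathsf{vls}(k)$) is then immediate, as $\gamma_0$ and $\gamma_1$ are defined componentwise from the $h_A$. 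Specializing to $N = M$ with $h$ invertible gives a quiver automorphism of $\mathsf{vls}(M)$, i.e.\ an objective transformation.

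I expect the main obstacle to be the crux of the second paragraph: pinning down exactly how a $\Sigma$-structure homomorphism acts on the power object $P\mathsf{Arrow}$ and why the compatibility clauses force preservation of the membership relation defining the fibres $\llbracket \mathsf{vlr} \rrbracket(x,y)$. Power objects are contravariant, so for an arbitrary $h$ there is genuine subtlety about which induced map on $P\mathsf{Arrow}$ the ``compatible with the type constructors'' condition refers to; restricting to isomorphisms, as the proposition does, removes this difficulty, since then the direct-image and inverse-image descriptions coincide. Everything else is diagram-chasing in $\mathcal{E}$ using only finite limits and the subobject classifier.
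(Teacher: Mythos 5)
Your construction is essentially the paper's own proof: you build the same quiver (iso)morphism, with $h_{\mathsf{Pitch}}$ on vertices and the restriction of $(h_{\mathsf{Pitch}} \times h_{\mathsf{Pitch}}) \times h_{\mathsf{Arrow}}$ on arrows, derive it from the commuting square for $\mathsf{vlr}$, and specialize to $M = N$. You go further than the paper in justifying why this restriction actually lands in the arrow object of $\mathsf{vls}(N)$ (the paper simply asserts that the square ``induces'' the quiver isomorphism, leaving the power-object/membership point implicit), and your functoriality remarks correspond to the paper's subsequent corollary.
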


\begin{proof}
Since $\mathsf{vlr}$ is the only nontrivial function symbol in $\Sigma_{\mathsf{VLS}}$, a $\Sigma_{\mathsf{VLS}}$-isomorphism $h : M \to N$ consists of isomorphisms $h_\mathsf{Pitch} : \llbracket \mathsf{Pitch} \rrbracket_M \to \llbracket \mathsf{Pitch} \rrbracket_N$ and $h_\mathsf{Arrow} : \llbracket \mathsf{Arrow} \rrbracket_M \to \llbracket \mathsf{Arrow} \rrbracket_N$ such that the square
\[\begin{tikzcd}
	{\llbracket \mathsf{Pitch} \rrbracket_M \times \llbracket \mathsf{Pitch} \rrbracket_M} && {\llbracket P\mathsf{Arrow} \rrbracket_M} \\
	\\
	{\llbracket \mathsf{Pitch} \rrbracket_N \times \llbracket \mathsf{Pitch} \rrbracket_N} && {\llbracket P\mathsf{Arrow} \rrbracket_N}
	\arrow["{\llbracket \mathsf{vlr} \rrbracket_M}", from=1-1, to=1-3]
	\arrow["{h_\mathsf{Pitch} \; \times \; h_\mathsf{Pitch} }"', from=1-1, to=3-1]
	\arrow["{h_{P\mathsf{Arrow}}}", from=1-3, to=3-3]
	\arrow["{\llbracket \mathsf{vlr} \rrbracket_N}"', from=3-1, to=3-3]
\end{tikzcd}\]
commutes. This induces the quiver isomorphism
\[\begin{tikzcd}
	{\sum\limits_{(x,  y) \; \in \; \llbracket \mathsf{Pitch} \; \times \; \mathsf{Pitch} \rrbracket_M }  \llbracket \mathsf{vlr} \rrbracket_M(x,y)} &&& {\llbracket \mathsf{Pitch} \rrbracket_M} \\
	\\
	{\sum\limits_{(x', y') \; \in \; \llbracket \mathsf{Pitch} \; \times \; \mathsf{Pitch} \rrbracket_N }  \llbracket \mathsf{vlr} \rrbracket_N(x',y')} &&& {\llbracket \mathsf{Pitch} \rrbracket_N}
	\arrow["{\mathrm{pr}_1 \; \circ \; \mathrm{pr}_1}", shift left=2, from=1-1, to=1-4]
	\arrow["{\mathrm{pr}_2 \; \circ \; \mathrm{pr}_1}"', shift right=2, from=1-1, to=1-4]
	\arrow["{(h_\mathsf{Pitch} \; \times \; h_\mathsf{Pitch}) \; \times \; h_\mathsf{Arrow}}"', from=1-1, to=3-1]
	\arrow["{h_{\mathsf{Pitch}}}", from=1-4, to=3-4]
	\arrow["{\mathrm{pr}_1 \; \circ \; \mathrm{pr}_1}", shift left=2, from=3-1, to=3-4]
	\arrow["{\mathrm{pr}_2 \; \circ \; \mathrm{pr}_1}"', shift right=2, from=3-1, to=3-4]
\end{tikzcd}\]
When $M=N$, this reduces to an automorphism, i.e.\ an objective transformation of a voice-leading space.
\end{proof}

\begin{corollary}
The assignment $M \mapsto \mathsf{vls}(M)$ extends to a functor
\[
\mathsf{vls} :
  \Sigma_{\mathsf{VLS}}\text{-}\mathbf{Str}(\mathcal{E})
  \longrightarrow
  \mathbf{Quiv}(\mathcal{E}).
\]
Moreover, if some $h : M \to N$ is an automorphism in $\Sigma_\mathsf{VLS}\text{-}\mathbf{Str}(\mathcal{E})$, then 
\[
\mathsf{vls}(h) : \mathsf{vls}(M) \longrightarrow \mathsf{vls}(N)
\] 
is an objective transformation in $\mathbf{Quiv}(\mathcal{E})$.
\end{corollary}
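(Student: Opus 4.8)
The plan is to assemble $\mathsf{vls}$ directly out of the construction carried out in the proof of the preceding Proposition, observing that that construction never actually used invertibility of $h$, and then to verify the two functor axioms. The final ``moreover'' clause will then be immediate from the general fact that functors preserve isomorphisms.

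On objects, I would take $\mathsf{vls}(M)$ to be precisely the internal quiver of the paragraph ``Definable quiver'': vertex object $\llbracket\mathsf{Pitch}\rrbracket_M$, arrow object the dependent sum $\sum_{(x,y)}\llbracket\mathsf{vlr}\rrbracket_M(x,y)$ (formed internally to $\mathcal{E}$ as the total object of the $\llbracket\mathsf{Pitch}\rrbracket_M\times\llbracket\mathsf{Pitch}\rrbracket_M$-indexed family of subobjects of $\llbracket\mathsf{Arrow}\rrbracket_M$ classified by $\llbracket\mathsf{vlr}\rrbracket_M$), with source and target $\mathrm{pr}_1\circ\mathrm{pr}_1$ and $\mathrm{pr}_2\circ\mathrm{pr}_1$. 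On morphisms, given any $h:M\to N$ in $\Sigma_{\mathsf{VLS}}\text{-}\mathbf{Str}(\mathcal{E})$, I would re-run the Proposition's argument verbatim: since $h$ is a $\Sigma$-structure homomorphism it is compatible with the type constructors, in particular with the power object, so $h_{P\mathsf{Arrow}}$ is the direct-image map along $h_{\mathsf{Arrow}}$; combined with the commuting square relating $\llbracket\mathsf{vlr}\rrbracket_M$ and $\llbracket\mathsf{vlr}\rrbracket_N$ through $h_{\mathsf{Pitch}}\times h_{\mathsf{Pitch}}$, this says exactly that $h_{\mathsf{Arrow}}$ carries the subobject $\llbracket\mathsf{vlr}\rrbracket_M(x,y)$ into $\llbracket\mathsf{vlr}\rrbracket_N(h_{\mathsf{Pitch}}x,\,h_{\mathsf{Pitch}}y)$. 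Hence $(h_{\mathsf{Pitch}}\times h_{\mathsf{Pitch}})$ on the base, together with the restriction of $h_{\mathsf{Arrow}}$ on fibres, assemble into an arrow map $\sum_{(x,y)}\llbracket\mathsf{vlr}\rrbracket_M(x,y)\to\sum_{(x',y')}\llbracket\mathsf{vlr}\rrbracket_N(x',y')$, and, paired with $h_{\mathsf{Pitch}}$ on vertices, this is the quiver morphism $\mathsf{vls}(h)$; the source and target squares commute because the arrow map is by construction fibered over $h_{\mathsf{Pitch}}\times h_{\mathsf{Pitch}}$, so post-composing with a projection reproduces the required identity. The Proposition's diagram is just the special case in which every component of $h$ is an isomorphism.

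Functoriality then follows from the uniqueness inherent in the universal properties used to form the dependent sums: $\mathsf{vls}(\mathrm{id}_M)$ is assembled from identity components, hence equals $\mathrm{id}_{\mathsf{vls}(M)}$; and both $\mathsf{vls}(k\circ h)$ and $\mathsf{vls}(k)\circ\mathsf{vls}(h)$ are assembled from $k_{\mathsf{Pitch}}\circ h_{\mathsf{Pitch}}$ on the base and from the composite restriction of $k_{\mathsf{Arrow}}\circ h_{\mathsf{Arrow}}$ on fibres, so they coincide. This yields the functor $\mathsf{vls}:\Sigma_{\mathsf{VLS}}\text{-}\mathbf{Str}(\mathcal{E})\to\mathbf{Quiv}(\mathcal{E})$. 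For the ``moreover'' clause: if $h:M\to M$ is an automorphism, then, $\mathsf{vls}$ being a functor, $\mathsf{vls}(h)$ is an isomorphism with domain and codomain both $\mathsf{vls}(M)$, i.e.\ a quiver automorphism of $\mathsf{vls}(M)$, which is exactly an objective transformation in the sense of Definition~\ref{def:subjective-and-objective-transformations}.

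The step I expect to be the real obstacle is the passage to fibres: making rigorous, \emph{internally to $\mathcal{E}$}, the claim that the commuting square of $h$-data induces a morphism of the associated total (dependent-sum) objects, and that this assignment is functorial. Over $\mathbf{Set}$ this is transparent, but in a general topos it requires identifying $\sum_{(x,y)}\llbracket\mathsf{vlr}\rrbracket_M(x,y)$ with (say) the domain of the subobject
\[
\{\,((x,y),a)\mid a\in\llbracket\mathsf{vlr}\rrbracket_M(x,y)\,\}\;\hookrightarrow\;\llbracket\mathsf{Pitch}\rrbracket_M\times\llbracket\mathsf{Pitch}\rrbracket_M\times\llbracket\mathsf{Arrow}\rrbracket_M,
\]
and then verifying that $(h_{\mathsf{Pitch}}\times h_{\mathsf{Pitch}})\times h_{\mathsf{Arrow}}$ restricts to it — equivalently, chasing the evaluation/membership map through the subobject classifier $\Omega$ and using the compatibility of $h$ with $P(-)$. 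Once that is pinned down, the remaining work is routine bookkeeping.
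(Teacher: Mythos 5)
Your proposal is correct and follows exactly the route the paper intends: the corollary is stated without proof precisely because the construction in the Proposition's proof never uses invertibility of the components $h_{\mathsf{Pitch}}$, $h_{\mathsf{Arrow}}$, so it defines $\mathsf{vls}$ on arbitrary morphisms, functoriality follows from the componentwise assembly, and the ``moreover'' clause is the general fact that functors preserve isomorphisms. The one subtlety you rightly flag---that compatibility of $h_{P\mathsf{Arrow}}$ with the power-object constructor (via the direct image $\exists_{h_{\mathsf{Arrow}}}$) is what lets $h_{\mathsf{Arrow}}$ restrict to the fibres of the dependent sum internally to $\mathcal{E}$---is a gap in the paper's own level of detail rather than in your argument.
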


\section{Conclusion}
\label{sec:conclusion}

In this paper we have presented type theory as a symbolic \emph{lingua franca} for mathematical music theorists. By this we mean that type theory supplies a syntactic machinery for expressing the form of mathemusical constructions even prior to their interpretation---a role fulfilled, as shown in Section~\ref{sec:categorical-semantics} (and examined more thoroughly in Appendix~\ref{sec:categorical-semantics-a-deeper-look}), by categorical semantics.

We have illustrated the framework’s generality through a range of examples: from simple musical propositions such as ``$c$ (a chord) is the dominant of $k$ (a key)'' (Example~\ref{ex:functional-harmony}), to Lewin’s conception of a generalized interval system as a theory over a signature $\mathbb{GIS}$ whose semantic models are particular generalized interval systems (Example~\ref{ex:interpretation-of-pitch-classes-and-intervals}). A more developed application was presented in Section~\ref{sec:an-application-to-the-theory-of-voice-leading-spaces}, where we formalized the concept of a voice-leading space together with the notions of subjective and objective transformations (Definition~\ref{def:subjective-and-objective-transformations}). The reader may also note Examples~\ref{ex:proof-all-interval} and~\ref{ex:proof-domfunc-leading-tone} in Section~\ref{subsub:quantifiers-in-dependent-type-theory}, which show how mathemusical propositions---such as ``$C$ is an all-interval pitch-class set''---correspond to types whose terms are proofs of those propositions. This perspective, known as \emph{propositions-as-types}, is not only philosophically illuminating but also practically fruitful, since it permits the construction of types defined by the procedures required to verify the truth of a proposition: to construct a proof is precisely to exhibit a term of such a type.

The long-term value of the type-theoretic framework will ultimately depend on the fruitfulness of future applications. While the treatment of voice-leading spaces in Section~\ref{sec:an-application-to-the-theory-of-voice-leading-spaces} already offers positive evidence of its usefulness for mathemusical reasoning, further work will be needed to determine its broader scope. My hope is that theorists will feel encouraged to explore type theory as a symbolic language serviceable to the formulation of mathemusical concepts. 

Many categorical approaches are already in use (e.g.~\cite{flieder2024towards,noll2005topos,mazzola2018topos,mazzola2002topos,popoff2018relational}); and, as discussed in Section~\ref{sub:the-internal-language-of-a-category}, when such categories possess suitable structure---for instance, when they are toposes, as all of the cited examples are---one can already engage in theory-building through their internal languages, without invoking the complex procedures that operate ``under the hood.'' 

For example, in a presheaf category $[\mathcal{C}^{\mathrm{op}}, \mathbf{Set}] = \mathcal{C}^@$---such as $\mathbf{Mod}^@$~\cite{mazzola2002topos} or $\mathbf{Rel}^@$~\cite{flieder2024towards}---a presheaf $A$ may be regarded as a type, so that writing $x \in A$ serves as shorthand for either:
\begin{itemize}
    \item $x : A$ in the internal language of $\mathcal{C}^@$; or
    \item when $A : X \to \Omega$ is a propositional function, $x \in A$ denotes the evaluation of the propositional function
    \[
        \in \; \colon X \times \Omega^X \longrightarrow \Omega
    \]
    at the pair $(x, A) : X \times \Omega^X$ (see Example~\ref{ex:interpretation-of-scale-membership}).
\end{itemize}

Lastly, in keeping with the idea of a \emph{lingua franca}, future developments will depend on how individual theorists make use of the framework to articulate their own mathemusical inquiries. The generality of the framework is what grants it this openness: it imposes no particular ontological commitments concerning what kinds of things exist apart from the minimal assumptions of terms, types, and their relations. Its syntax is highly general yet deliberately bare: agnostic with respect to semantics, but affording semantic interpretation through categorical semantics. In this way, syntactic constructions may be interpreted across a variety of ontological contexts, allowing the same formal language to serve many different theoretical contexts.

\pagebreak

\printbibliography

\appendix

\section{Categorical Semantics: A Deeper Look}
\label{sec:categorical-semantics-a-deeper-look}

In Section~\ref{sec:categorical-semantics}, we introduced the basic ideas of categorical semantics, which associates meaningful structural content with the abstract types, functions, and relations of a type language. However, a fully expressive type language does more than describe primitive types and their morphisms: it also accommodates the formation of \emph{compound propositions} through logical connectives and quantifiers. 

Our earlier discussion did not address the categorical semantics of these logical operations. In this appendix, we take up that task, showing how categorical language articulates the semantics of logical connectives and quantifiers, thereby endowing the type language with its full expressive power.

\subsection{Categorical Semantics of Connectives}
\label{sub:Categorical Semantics of Connectives}

We begin with the categorical semantics of the logical connectives. Recall that for a signature $\Sigma$ containing a type $\mathsf{Prop}$ of propositions, its interpretation in a topos $\mathcal{E}$ is given by the subobject classifier, so that
\[
\llbracket \mathsf{Prop} \rrbracket = \Omega.
\]
A propositional function $p : X \to \mathsf{Prop}$ in the type language is then interpreted as a morphism 
\[
\llbracket p \rrbracket : \llbracket X \rrbracket \longrightarrow \Omega
\]
in $\mathcal{E}$. This is crucial for what follows, as the categorical semantics of the connectives can be understood in terms of operations in $\mathcal{E}$ involving the subobject classifier $\Omega$.\footnote{See \cite[Chapter~6]{goldblatt1984topoi} for a detailed treatment.}

\subsubsection{Logical Connectives as Operations on the Subobject Classifier}
\label{subsub:logical-connectives-as-operations-on-the-subobject-classifer}

Recall that the type $\mathsf{Prop}$ has two distinguished terms, $\bot : \mathsf{Prop}$ for ``false'' and $\top : \mathsf{Prop}$ for ``true.'' In the semantic category $\mathcal{E}$, we denote the interpretations of these terms by the same labels $\bot$ and $\top$. Thus, $\bot : 1 \to \Omega$ picks out the bottom element of the subobject classifier (corresponding to ``always false''), while $\top : 1 \to \Omega$ picks out the top element (corresponding to ``always true'').

\paragraph{Negation on $\Omega$} 

The negation operator is defined as the unique morphism $\neg : \Omega \to \Omega$ that makes the following square a pullback:
\[\begin{tikzcd}
	1 && \Omega \\
	\\
	1 && \Omega
	\arrow["\bot", from=1-1, to=1-3]
	\arrow[from=1-1, to=3-1]
	\arrow["\neg", from=1-3, to=3-3]
	\arrow["\top"', from=3-1, to=3-3]
\end{tikzcd}\]

This pullback condition characterizes $\neg$ as the operation that exchanges the truth values in $\Omega$: it sends $\top$ to $\bot$ and $\bot$ to $\top$.

\paragraph{Conjunction on $\Omega$}

The conjunction operator $\land : \Omega \times \Omega \to \Omega$ is defined as the characteristic morphism of the product map $(\top, \top) : 1 \to \Omega \times \Omega$. This construction reflects the fact that conjunction corresponds to the \emph{meet} operation in the Heyting algebra structure of $\Omega$. For two truth values $u, v \in \Omega$, their conjunction $u \land v$ is the greatest lower bound of $u$ and $v$. Equivalently, when $\Omega$ is regarded as a posetal category, this meet coincides with the categorical product of $u$ and $v$.

\paragraph{Disjunction on $\Omega$}

To define the disjunction operator, we first define the morphism
    \[
    \begin{matrix}
    (\top, \mathrm{Id}_\Omega) : & \Omega & \longrightarrow & \Omega \times \Omega \\
    & v & \longmapsto & (\top, v)  \\
    \end{matrix}    
    \] 
    which sends a truth value $v$ in $\Omega$ to the pair $(\top, v)$. Then, the disjunction operator $\lor : \Omega \times \Omega \rightarrow \Omega$ is the character map of the morphism 
    \[ \left[ (\top, \mathrm{Id}_\Omega), \; (\mathrm{Id}_\Omega, \top) \right] : \Omega + \Omega \longrightarrow \Omega \times \Omega. \]
    This construction reflects that disjunction corresponds to the \emph{join} operation in the Heyting algebra structure of $\Omega$. Given two truth values $u, v \in \Omega$, their disjunction $u \lor v$ is the least upper bound of $u$ and $v$, capturing the idea that $u \lor v$ holds whenever either $u$ or $v$ does. From a categorical perspective, this join is dual to the meet: when $\Omega$ is viewed as a posetal category, it corresponds to the categorical coproduct rather than the product.
    
To make this construction more concrete, consider the case in $\mathbf{Set}$, where 
\[
\Omega = 2 = \{\bot, \top\}.
\] 
In this setting, the coproduct $2 + 2$ can be written as $\{\bot_1, \top_1, \bot_2, \top_2\}$, where the subscripts indicate whether an element comes from the first or second copy of $2$. The morphism 
\[
\left[ (\top, \mathrm{Id}_\Omega), \; (\mathrm{Id}_\Omega, \top) \right] : 2 + 2 \longrightarrow 2 \times 2
\]
is then defined by
\[
\begin{aligned}
\top_1 &\longmapsto (\top, \top) \\
\bot_1 &\longmapsto (\top, \bot) \\
\top_2 &\longmapsto (\top, \top) \\
\bot_2 &\longmapsto (\bot, \top)
\end{aligned}
\]
so that the image of this map consists of all pairs in $2 \times 2$ except $(\bot, \bot)$. Consequently, the characteristic morphism 
\[
\lor : 2 \times 2 \longrightarrow 2
\]
assigns $\top$ to every pair except $(\bot, \bot)$, which it sends to $\bot$, exactly as expected for logical disjunction.

\paragraph{Implication on $\Omega$}

The implication operator $\Rightarrow : \Omega \times \Omega \to \Omega$ is defined as the characteristic morphism of 
\[
e : \ \leq \ \longrightarrow \Omega \times \Omega,
\]
where $\leq$ is the equalizer of the following diagram:
\[\begin{tikzcd}
	{\Omega\times \Omega} && \Omega
	\arrow["\land", shift left=3, from=1-1, to=1-3]
	\arrow["{\mathrm{pr}_1}"', shift right=3, from=1-1, to=1-3]
\end{tikzcd}\]

To see how the subobject $\leq$ arises, consider where the morphisms $\land$ and $\mathrm{pr}_1$ agree. The morphism $\land : \Omega \times \Omega \to \Omega$ computes the meet (greatest lower bound) of two truth values, while the projection $\mathrm{pr}_1 : \Omega \times \Omega \to \Omega$ returns the first component. For any $(u, v) \in \Omega \times \Omega$, we have
\[
\mathrm{pr}_1(u, v) = u \quad \text{and} \quad \land(u, v) = u \wedge v.
\]
These are equal precisely when $u = u \wedge v$, i.e.\ when $u \leq v$ in the Heyting algebra order on $\Omega$. Thus, the equalizer $\leq$ is the subobject of $\Omega \times \Omega$ consisting of pairs $(u, v)$ such that $u \leq v$.

Categorically, the implication operator $\Rightarrow$ therefore classifies the subobject $\leq$, encoding the logical idea that “if $u$ is true, then $v$ is also true.” In other words, $u \Rightarrow v$ holds exactly when $u \leq v$ in the internal order of $\Omega$.

\subsubsection{Extending Connectives to Typed Propositions}
\label{subsub:extending-connectives-to-typed-propositions}

The operators on $\Omega$ discussed above provide the categorical semantics of the logical connectives. To extend these to typed propositions, consider an interpretation $M : \mathcal{C}\ell(\Sigma) \to \mathcal{E}$. For convenience, we abbreviate the interpretation of any type-theoretic expression $X$ under $M$, written $\llbracket X \rrbracket_M$, simply as $\llbracket X \rrbracket$.

\paragraph{Negation on Typed Propositions} 

For a relation $R$ of type $A_1, \ldots, A_n$, the categorical semantics of its negation $\neg R$ is given by the composition of morphisms:
\[\begin{tikzcd}
	{\llbracket A_1, \ldots, A_n\rrbracket} && \Omega \\
	\\
	\\
	\Omega
	\arrow["{\llbracket R \rrbracket}", from=1-1, to=1-3]
	\arrow["{\llbracket \neg R \rrbracket \ = \ \neg  \ \circ \  \llbracket R \rrbracket }"', from=1-1, to=4-1]
	\arrow["\neg", from=1-3, to=4-1]
\end{tikzcd}\]

\paragraph{Conjunction on Typed Propositions}

For relations $R$ of type $A_1, \ldots, A_n$ and $S$ of type $B_1, \ldots, B_m$, the categorical semantics of its conjunction $R \land S$ is given by the composition of morphisms:
\[\begin{tikzcd}
	{\llbracket A_1, \ldots, A_n\rrbracket \times \llbracket B_1, \ldots, B_m \rrbracket} && {\Omega \times \Omega} \\
	\\
	\\
	\Omega
	\arrow["{\langle \llbracket R \rrbracket,  \  \llbracket S \rrbracket \rangle }", from=1-1, to=1-3]
	\arrow["{\llbracket R \ \land \  S \rrbracket  \ = \  \land \  \circ \  \langle \llbracket R \rrbracket,  \  \llbracket S \rrbracket \rangle}"', from=1-1, to=4-1]
	\arrow["\land", from=1-3, to=4-1]
\end{tikzcd}\]

\paragraph{Disjunction on Typed Propositions} 

For relations $R$ of type $A_1, \ldots, A_n$ and $S$ of type $B_1, \ldots, B_m$, the categorical semantics of its disjunction $R \lor S$ is given by the composition of morphisms:
\[\begin{tikzcd}
	{\llbracket A_1, \ldots, A_n\rrbracket \times \llbracket B_1, \ldots, B_m \rrbracket} && {\Omega \times \Omega} \\
	\\
	\\
	\Omega
	\arrow["{\langle \llbracket R \rrbracket,  \  \llbracket S \rrbracket \rangle }", from=1-1, to=1-3]
	\arrow["{\llbracket R \ \lor \  S \rrbracket  \ = \  \lor \  \circ \  \langle \llbracket R \rrbracket,  \  \llbracket S \rrbracket \rangle}"', from=1-1, to=4-1]
	\arrow["\lor", from=1-3, to=4-1]
\end{tikzcd}\]

\paragraph{Implication on Typed Propositions} 

For relations $R$ of type $A_1, \ldots, A_n$ and $S$ of type $B_1, \ldots, B_m$, the categorical semantics of its implication $R \Rightarrow S$ is given by the composition of morphisms:
\[\begin{tikzcd}
	{\llbracket A_1, \ldots, A_n\rrbracket \times \llbracket B_1, \ldots, B_m \rrbracket} && {\Omega \times \Omega} \\
	\\
	\\
	\Omega
	\arrow["{\langle \llbracket R \rrbracket,  \  \llbracket S \rrbracket \rangle }", from=1-1, to=1-3]
	\arrow["{\llbracket R \ \Rightarrow \  S \rrbracket  \ = \  \Rightarrow \  \circ \  \langle \llbracket R \rrbracket,  \  \llbracket S \rrbracket \rangle}"', from=1-1, to=4-1]
	\arrow["\Rightarrow", from=1-3, to=4-1]
\end{tikzcd}\]

\subsection{Categorical Semantics of Quantifiers}
\label{sub:categorical-semantics-of-quantifiers}

Given a context 
\[
\Gamma = x_1 : A_1, \ldots, x_n : A_n
\]
and a propositional function 
\[
\Gamma, \; y : Y \; \vdash \; \phi : \mathsf{Prop},
\]
we may form a new proposition in the context $\Gamma$ by quantifying over $y$. For example, universal quantification yields
\[
\Gamma  \;  \vdash  \;  \forall y : Y.\, \phi : \mathsf{Prop},
\]
while existential quantification yields
\[
\Gamma  \;  \vdash  \;  \exists y : Y.\, \phi : \mathsf{Prop}.
\]

To formalize the categorical semantics of the quantifiers, we express them in terms of adjunctions.\footnote{The exposition here follows \cite{awodey1996structure}.}  Consider the hom-set $\mathcal{E}[X, \Omega]$, which represents the set of propositional functions on $X$ and carries the Heyting algebra structure of subobjects of $X$. For the operation 
\[
x^* : \mathcal{E}[1, \Omega] \longrightarrow \mathcal{E}[X, \Omega]
\] 
that sends each truth value $p : 1 \rightarrow \Omega$ to the constant $p$-valued propositional function on $X$, the quantifiers arise as the right and left adjoints to $x^*$. We now make this construction explicit.

\subsubsection{The Universal Quantifier}

First, define a second-order propositional function
\[
\begin{matrix}
\forall_X : & \Omega^X & \longrightarrow & \Omega \\
& \phi & \longmapsto & \bigwedge_{x : X} \phi(x)
\end{matrix}
\]
which maps each propositional function $\phi : X \to \Omega$ to the meet of the truth values that $\phi$ assigns to elements of $X$. Intuitively, this expresses the idea that $\forall_X(\phi)$ is \emph{true} precisely when $\phi(x)$ holds for \emph{every} $x : X$. Since $\forall_X$ takes propositional functions as input and returns a truth value, it is indeed a second-order propositional function.

Given a propositional function $\phi : X \rightarrow \Omega$, its corresponding morphism 
\[
\lambda_x.\phi : 1 \longrightarrow \Omega^X
\]
is the unique morphism that classifies $\phi$ as an element of the exponential $\Omega^X$. By composing this morphism with 
\[
\forall_X : \Omega^X \longrightarrow \Omega,
\]
we obtain a truth value in $\Omega$ representing the result of universally quantifying $\phi$ over $X$. In other words, $\forall_X \circ \lambda_x.\phi$ is the semantic interpretation of the formula $\forall x : X.\,\phi(x)$.\footnote{Here we abuse notation slightly by writing $X$ and $\phi$ both for the syntactic context and formula, and for their corresponding semantic interpretations as objects and morphisms in the topos $\mathcal{E}$.} This construction is summarized in the following commutative diagram:
\[\begin{tikzcd}
	1 &&& {\Omega^X} \\
	\\
	\\
	&&& \Omega
	\arrow["{\lambda_x.\phi}", from=1-1, to=1-4]
	\arrow["{\forall_x.\phi \ = \ \forall_X \ \circ \ \lambda_x.\phi}"', from=1-1, to=4-4]
	\arrow["{\forall_X}", from=1-4, to=4-4]
\end{tikzcd}\]

Now, as we hinted at above, $\forall_x$ acts as a functor
\[ 
\forall_x : \mathcal{E}[X, \Omega]\cong \mathcal{E}[1, \Omega^X] \longrightarrow \mathcal{E}[1, \Omega] 
\]
which takes propositional functions on $X$ to truth values in $\Omega$. This functorial action reflects the process of universal quantification over $X$, mapping each propositional function $\phi : X \to \Omega$ to the truth value $\forall x : X.\; \phi(x)$ in $\Omega$.

The right adjointness of $\forall_x$ to $x^*$ is expressed by the following Galois connection. For any truth value $p : 1 \rightarrow \Omega$ and any propositional function $\phi : X \rightarrow \Omega$, we have
\[ 
x^*.p \leq \phi \quad \text{if and only if} \quad p \leq \forall_x. \phi, 
\]
meaning that $\forall_x$ is right adjoint to $x^*$. 

\subsubsection{The Existential Quantifier}

On the other hand, the existential quantifier arises as the left adjoint of the  functor $x^*$. Specifically, we define the functor
\[ 
\exists_x : \mathcal{E}[X, \Omega] \longrightarrow \mathcal{E}[1, \Omega] 
\]
which maps propositional functions on $X$ to truth values in $\Omega$. To understand its action, we first introduce the second-order propositional function
\[
\begin{matrix}
	\exists_X : & \Omega^X & \longrightarrow & \Omega \\
	& \phi & \longmapsto & \bigvee_{x : X} \phi(x)
\end{matrix}	
\]
which sends a propositional function $\phi$ over $X$ to the join of the truth values that $\phi$ assigns to elements of $X$. This definition captures the usual intuition: the statement $\exists x.\,\phi(x)$ is \emph{true} just in case $\phi(x)$ holds for at least one element of $X$.

Given a propositional function $\phi : X \to \Omega$, we may view $\phi$ as a morphism 
\[
\lambda_x. \phi : 1 \longrightarrow \Omega^X.
\]
 The existential quantifier $\exists_x$ then acts on $\phi$ by the composite
\[ 
\exists_x . \phi = \exists_X \circ \lambda_x . \phi, 
\]
so that $\exists_x$ takes $\phi$ and maps it to a truth value in $\Omega$, as indicated in the following diagram:
\[\begin{tikzcd}
	1 &&& {\Omega^X} \\
	\\
	\\
	&&& \Omega
	\arrow["{\lambda_x.\phi}", from=1-1, to=1-4]
	\arrow["{\exists_x.\phi \ = \ \exists_X \ \circ \ \lambda_x.\phi}"', from=1-1, to=4-4]
	\arrow["{\exists_X}", from=1-4, to=4-4]
\end{tikzcd}\]

The adjoint relationship between $\exists_x$ and $x^*$ is expressed by the following Galois connection: for any truth value $q : 1 \to \Omega$ and any propositional function $\phi : X \to \Omega$, we have
\[
\exists_x.\phi \leq q 
\quad \text{if and only if} \quad 
\phi \leq x^*.q.
\]

\subsubsection{The General Case of Quantification}
\label{subsub:the-general-case-of-quantification}

Lastly, let us turn to the general case of a propositional function in several variables. Let 
\[
\phi : X \times Y_1 \times \cdots \times Y_n \longrightarrow \Omega
\]
be a propositional function. Set $ Y = Y_1 \times \cdots \times Y_n $, so that we may view $\phi$ as a propositional function 
\[
\phi : X \times Y \longrightarrow \Omega.
\]
The quantifiers $\forall_x$ and $\exists_x$ are again obtained by composing the classifying morphism $\lambda_x.\phi$ with $\forall_X$ and $\exists_X$, respectively:
\[
\forall_x.\phi = \forall_X \circ \lambda_x.\phi
\quad \text{and} \quad
\exists_x.\phi = \exists_X \circ \lambda_x.\phi.
\]

Now let $\psi : Y \to \Omega$ be a propositional function. To obtain a propositional function on $X \times Y$, we can extend the context by introducing a dummy variable of type $X$ and composing $\psi$ with the projection 
\[
\pi : X \times Y \longrightarrow Y,
\]
as shown in the diagram below:
\[\begin{tikzcd}
	{X \times Y} &&& Y \\
	\\
	\\
	&&& \Omega
	\arrow["\pi", from=1-1, to=1-4]
	\arrow["{\pi^*.\psi \ = \ \psi \ \circ \ \pi}"', from=1-1, to=4-4]
	\arrow["\psi", from=1-4, to=4-4]
\end{tikzcd}\]

This operation defines a functor 
\[
\pi^* : \mathcal{E}[Y, \Omega] \longrightarrow \mathcal{E}[X \times Y, \Omega]
\]
which maps propositional functions on $Y$ to propositional functions on $X \times Y$ by precomposing them with $\pi$. In the special case where $Y = 1$, this functor reduces to $ \pi^* = x^* $, the functor that extends propositional functions from the empty context to the context $ x : X $.

The general existential and universal quantifiers are now defined as functors
\[
\exists_x, \forall_x : \mathcal{E}[X \times Y, \Omega] \longrightarrow \mathcal{E}[Y, \Omega],
\]
and they serve as the left and right adjoints, respectively, to the functor $\pi^*$:
\[
\exists_x \dashv \pi^* \dashv \forall_x.
\]

\section{Propositions as Types}
\label{sec:propositions-as-types}

In this section, we explain the paradigm of \emph{propositions as types}, according to which propositions are associated not with truth values but with types. The central idea is that, rather than assigning a truth value to the proposition obtained by evaluating a predicate on a term, we interpret that proposition as the \emph{type of its proofs}: to prove it is to construct a term inhabiting the corresponding type.

To make this shift precise, we introduce the notions of \emph{fibrations} and \emph{slice categories}, which provide the categorical machinery needed to formalize a predicate as a family of types varying over the terms in a context. This framework is not only philosophically illuminating, but also practically useful, as it underlies the implementation of \emph{proof assistants}: computer programs that assist in constructing and verifying proofs. For these reasons, we present it here as a technical appendix that complements the main text.

\subsection{Fibrations}
\label{sub:fibrations}

We begin by introducing the notion of \emph{fibrations} and their relation to the logical structures discussed earlier. Throughout the main text, we alternated between two equivalent ways of representing propositional functions: as morphisms $\phi : X \to \mathsf{Prop}$ and as subobjects $\phi \hookrightarrow X$. We now make these notions more precise by situating them within the framework of fibrations and fibered categories.

A \emph{fibration} is a functor 
\[
p : \mathbb{E} \longrightarrow \mathbb{B}
\]
from a category $\mathbb{E}$, called the \emph{total category}, to a category $\mathbb{B}$, called the \emph{base category}. The base category $\mathbb{B}$ represents the possible \emph{contexts} or \emph{universes of discourse}, while the fiber $p^{-1}(B)$ over an object $B \in \mathbb{B}$ is the category of predicates defined over that context.

A classic example of a fibration is given by the \emph{predicate fibration}. Let $\mathbb{B} = \mathbf{Set}$ be the category of sets, and let $\mathbb{E} = \mathbf{Pred}$ be the category whose objects are pairs $(I, X)$, where $I$ is a set and $X \subseteq I$. In this setting, $X$ represents a predicate on $I$: we write $X(i)$ for $i \in X$ to emphasize that the elements $i \in I$ are those with respect to which the predicate $X$ is evaluated.

A morphism $(I, X) \to (J, Y)$ in $\mathbf{Pred}$ is a function $u : I \to J$ such that $X(i) \Rightarrow Y(u(i))$ for each $i \in I$. In other words, $u$ preserves the truth of predicates under reindexing. The forgetful functor 
\[
p : \mathbf{Pred} \longrightarrow \mathbf{Set}
\]
defined by $p(I, X) = I$ on objects and $p(u) = u$ on morphisms is a fibration. The fiber over a set $I$ is therefore the poset of subsets of $I$, ordered by inclusion, which we can understand as the category of predicates defined over the context $I$.\footnote{See \cite[11]{jacobs1999categorical}.}

For present purposes, we focus on the basic intuition behind a fibration. (For a formal definition, see \cite[27]{jacobs1999categorical}.) The essential idea is that a type theory over a signature $\Sigma$ can be interpreted in a fibration $p : \mathbb{E} \to \mathbb{B}$. This interpretation is carried out relative to a model $M$ of the classifying category $\mathcal{C}\ell(\Sigma)$ of the type theory, as illustrated below:
\[\begin{tikzcd}
	&& {\mathbb{E}} \\
	\\
	{\mathcal{C}\ell(\Sigma)} && {\mathbb{B}}
	\arrow["p", from=1-3, to=3-3]
	\arrow["M"', from=3-1, to=3-3]
\end{tikzcd}\]
In this setup, the model $M$ interprets the syntax of the type theory within the base category $\mathbb{B}$, while the fibration $p$ organizes the semantic layer above it: it classifies the predicates defined over the interpreted types.

Let us now examine an example illustrating the interpretation of a signature $\Sigma$ in the predicate fibration $p : \mathbf{Pred} \to \mathbf{Set}$. A relation $R$ on types $A_1, \ldots, A_n$ in the signature $\Sigma$ corresponds to an object in $\mathbf{Pred}$ above the set $\llbracket A_1, \ldots, A_n \rrbracket_M$, i.e., a predicate defined over the semantic domain of the relation.

To make this correspondence explicit, we first define a fibration 
\[
p' : \mathcal{L}(\Sigma, \Pi) \longrightarrow \mathcal{C}\ell(\Sigma),
\] 
where $\mathcal{C}\ell(\Sigma)$ denotes the classifying category of the type theory, and $\mathcal{L}(\Sigma, \Pi)$ is the category whose objects are propositional functions over the contexts of $\mathcal{C}\ell(\Sigma)$, defined as follows. For each object (that is, each context)
\[
\Gamma = x_1 : A_1, \ldots, x_n : A_n
\]
in $\mathcal{C}\ell(\Sigma)$, the fiber of $p'$ over $\Gamma$ is the collection of propositional functions definable in that context:
\[
\Gamma \; \vdash \; \varphi : \mathsf{Prop}.
\]

As with the predicate fibration, morphisms between propositional functions are induced by morphisms $f : \Gamma \to \Delta$ between contexts. Given propositional functions
\[
\Gamma \; \vdash \; \varphi : \mathsf{Prop}
\quad\text{and}\quad
\Delta \; \vdash \; \psi : \mathsf{Prop},
\]
a morphism
\[
(\Gamma \; \vdash \; \varphi : \mathsf{Prop}) \longrightarrow (\Delta \; \vdash \; \psi : \mathsf{Prop})
\]
is a context morphism $f : \Gamma \to \Delta$ such that
\[
\varphi(x) \Rightarrow \psi(f(x)) \quad \text{for all } x : \Gamma.
\]
In the special case where $\Gamma = \Delta$, such morphisms represent entailments between propositions in the same context.

Hence, we have a morphism of fibrations:
\[\begin{tikzcd}
	{\mathcal{L}(\Sigma, \Pi)} && {\mathbf{Pred}} \\
	\\
	{\mathcal{C}\ell(\Sigma)} && {\mathbf{Set}}
	\arrow["{M'}", from=1-1, to=1-3]
	\arrow["{p'}"', from=1-1, to=3-1]
	\arrow["p", from=1-3, to=3-3]
	\arrow["M"', from=3-1, to=3-3]
\end{tikzcd}\]
where $M'$ interprets propositional functions in the context $\Gamma$ as objects in the fiber of $p$ over $\llbracket \Gamma \rrbracket_M$.

\subsection{From Truth Values to Types}
\label{sub:from-truth-values-to-types}

We may call the framework in which a proposition is understood as the evaluation of a predicate $\varphi$ on a term $x : X$ to yield a value $\varphi(x) : \mathsf{Prop}$ the \emph{propositions-as-truth-values} paradigm, which is the conventional perspective. In contrast, in the \emph{propositions-as-types} paradigm, the meaning of a proposition is not merely a truth value, but the \emph{type of its proofs}.

For example, rather than interpreting the proposition ``971 is not prime'' as a statement that is either true or false, we interpret it as a type: namely, the type consisting of all possible proofs of that proposition. And what constitutes a proof that a number $x$ is not prime? It is the construction of a pair of integers $n$ and $m$, both greater than $1$, together with a demonstration that their product equals $x$.\footnote{See \cite{martin1998intuitionistic}.}

Accordingly, instead of expressing a proposition in context as
\[
x : X \; \vdash \; \varphi(x) : \mathsf{Prop},
\]
we now write
\[
x : X \; \vdash \; \varphi(x) : \mathsf{Type}.
\]
Here the predicate $\varphi$, when evaluated at a term $x$, yields a type: specifically, the type whose inhabitants are the proofs of the proposition. To prove $\varphi(x)$ is therefore to \emph{construct a term} of that type. 

Type theories that allow for the construction of types that depend on terms of other types—so-called \emph{dependent types}—are known as \emph{dependent type theories}. A typical example of a dependent type is a family
\[
x : X \; \vdash \; B(x) : \mathsf{Type},
\]
where the type $B(x)$ varies with the choice of term $x : X$.

In the familiar \emph{propositions-as-truth-values} setting, the categorical semantics of a type theory is given by a \emph{subobject fibration}  
\[
p : \mathbf{Sub}(\mathbb{B}) \longrightarrow \mathbb{B},
\]  
where $\mathbb{B}$ is a topos. Here the fiber over an object $B$ consists of its subobjects---equivalently, monomorphisms into $B$---which correspond to predicates as morphisms $ B \to \Omega$, with $\Omega$ the subobject classifier. This expresses the idea that evaluating a predicate at an element of $B$ yields a truth value in $\Omega$. The predicate fibration discussed earlier is the special case of this construction obtained by taking $\mathbb{B} = \mathbf{Set}$.

However, once we adopt the propositions-as-types paradigm, this picture no longer suffices. Evaluating a predicate at a term no longer produces a mere truth value, but rather a type, potentially with many distinct inhabitants, i.e., many possible proofs. Consequently, predicates are no longer adequately modeled as subobjects. 

Thus, rather than a predicate $\varphi$ on $X$ being interpreted as a subobject 
\[
\llbracket \varphi \rrbracket \hooklongrightarrow \llbracket X \rrbracket,
\]
it is now interpreted as a morphism 
\[
q : \llbracket \varphi \rrbracket \longrightarrow \llbracket X \rrbracket
\]
whose fiber over each element $\llbracket x \rrbracket \in \llbracket X \rrbracket$ is the type of proofs of $\varphi(x)$.

\subsection{Categorical Semantics of Dependent Type Theory}
\label{sub:categorical-semantics-of-dependent-type-theory}

One approach to the categorical semantics of dependent type theory proceeds via the \emph{codomain fibration}. The arrow category $\mathbb{B}^\rightarrow$ of a category $\mathbb{B}$ has as its objects the morphisms $f : X \to X'$ in $\mathbb{B}$, and as its morphisms the commuting squares of the form:
\[\begin{tikzcd}
	X && Y \\
	\\
	{X'} && {Y'}
	\arrow["u", from=1-1, to=1-3]
	\arrow["f"', from=1-1, to=3-1]
	\arrow["g", from=1-3, to=3-3]
	\arrow["v"', from=3-1, to=3-3]
\end{tikzcd}\]
The \emph{codomain fibration} 
\[
\mathrm{cod} : \mathbb{B}^\rightarrow \longrightarrow \mathbb{B}
\]
is defined by sending each arrow $f : X \to X'$ to its codomain $X'$, and each commuting square as above to the bottom arrow $v : X' \to Y'$.\footnote{See \cite{nlab_codomain_fibration}.}

The fiber of $\mathrm{cod}$ over an object $X$ is the slice category $\mathbb{B}/X$. An object in this slice category is a morphism $\psi : A \to X$, which can be understood as a dependent type indexed by $X$: the fiber $\psi^{-1}(x)$ over an element $x \in X$ is the object of proofs of the proposition $\psi(x)$.

To summarize, in dependent type theory a proposition in context $z : Z$ is expressed as a judgment
\[
z : Z \;\vdash\; \varphi(z) : \mathsf{Type}.
\]
This states that, when evaluated at a term $z : Z$, the predicate $\varphi$ yields a type, namely  the type of its proofs. Semantically, such a judgment is interpreted as an object
\[
q : \llbracket \varphi \rrbracket \longrightarrow \llbracket Z \rrbracket
\]
in the slice category $\mathbb{B}/\llbracket Z \rrbracket$, where the fiber over $\llbracket z \rrbracket \in \llbracket Z \rrbracket$ is the type of proofs of $\varphi(z)$.

In what follows, we develop the categorical semantics of logical connectives and quantifiers for dependent type theory, interpreting them in codomain fibrations.

\subsubsection{Connectives in Dependent Type Theory}
\label{subsub:connectives-in-dependent-type-theory}

The logical connectives in dependent type theory are interpreted by categorical constructions in the slice category $\mathbb{B}/\llbracket \Gamma \rrbracket$, where $\Gamma$ is a context in the type theory. Given two predicates $\phi$ and $\psi$ in context $\Gamma$, their semantic interpretations are as follows: 

\begin{itemize}
    \item \textbf{Conjunction} ($\phi \land \psi$): interpreted as the product 
    \[
    \llbracket \phi \land \psi \rrbracket \;\cong\; \llbracket \phi \rrbracket \times \llbracket \psi \rrbracket.
    \]
    \textit{Explanation:} A proof of $\phi \land \psi$ consists of both a proof of $\phi$ and a proof of $\psi$. The categorical product captures this by requiring a pair of elements, one from each type.

    \item \textbf{Disjunction} ($\phi \lor \psi$): interpreted as the coproduct 
    \[
    \llbracket \phi \lor \psi \rrbracket \;\cong\; \llbracket \phi \rrbracket + \llbracket \psi \rrbracket.
    \]
    \textit{Explanation:} A proof of $\phi \lor \psi$ is either a proof of $\phi$ or a proof of $\psi$. The coproduct expresses this disjunctive structure: elements of the sum type are tagged as belonging to one disjunct or the other.

    \item \textbf{Implication} ($\phi \Rightarrow \psi$): interpreted as the internal hom 
    \[
    \llbracket \phi \Rightarrow \psi \rrbracket \;\cong\; \llbracket \psi \rrbracket^{\llbracket \phi \rrbracket}.
    \]
    \textit{Explanation:} By the Curry–Howard correspondence, a proof of $\phi \Rightarrow \psi$ is a function that transforms any proof of $\phi$ into a proof of $\psi$. The internal hom captures this functional behavior.

    \item \textbf{Negation} ($\neg \phi$): interpreted as 
    \[
    \llbracket \neg \phi \rrbracket \;\cong\; 0^{\llbracket \phi \rrbracket},
    \]
    where $0$ is the initial object of $\mathbb{B}/\llbracket \Gamma \rrbracket$.  
    \textit{Explanation:} Since no morphisms exist from a non-initial object into $0$, the type $0^{\llbracket \phi \rrbracket}$ is inhabited if and only if $\llbracket \phi \rrbracket$ is initial. Thus, $\neg \phi$ is provable precisely when $\phi$ is unprovable.

    \item \textbf{Truth} ($\top$): interpreted as the terminal object of the slice category, namely the identity morphism 
    \[
    \llbracket \top \rrbracket \;\cong\; \mathrm{id}_{\llbracket \Gamma \rrbracket} : \llbracket \Gamma \rrbracket \longrightarrow \llbracket \Gamma \rrbracket.
    \]

    \item \textbf{Falsehood} ($\bot$): interpreted as the initial object of the slice category, 
    \[
    \llbracket \bot \rrbracket \;\cong\; 0 \longrightarrow \llbracket \Gamma \rrbracket,
    \]
\end{itemize}

\subsubsection{Quantifiers in Dependent Type Theory}
\label{subsub:quantifiers-in-dependent-type-theory}

We now turn to the categorical semantics of quantifiers within the propositions-as-types framework. In what follows, we drop the notational distinction between type-theoretic judgments and their categorical semantics, writing expressions such as $x : X$ which may denote either a type-theoretic context or its interpretation in $\mathbb{B}$. This simplifies the exposition, though it is formally an abuse of notation. 

Consider a projection map 
\[
\pi : X \times Y \longrightarrow Y
\]
in the base category $\mathbb{B}$ of a fibration $p : \mathbb{E} \to \mathbb{B}$. Such a projection induces a functor between slice categories,
\[
\pi^* : \mathbb{B}/Y \longrightarrow \mathbb{B}/(X \times Y),
\]
known as the \emph{base change} or \emph{reindexing} functor. This functor corresponds to the same construction as the ``dummy variable'' functor introduced earlier in the propositions-as-subobjects approach (see Section~\ref{subsub:the-general-case-of-quantification}).

The base change functor $\pi^*$ sends predicates $\phi : A \to Y$ in $\mathbb{B}/Y$ to their pullback along $\pi$, as represented by the following commutative square:
\[
\begin{tikzcd}
	{(X \times Y) \times_Y A} && A \\
	\\
	{X \times Y} && Y
	\arrow[from=1-1, to=1-3]
	\arrow[from=1-1, to=3-1]
	\arrow["\phi", from=1-3, to=3-3]
	\arrow["\pi"', from=3-1, to=3-3]
\end{tikzcd}
\]
Put simply, $\pi^*$ transforms a predicate over $Y$ into a predicate over $X \times Y$ by \emph{reindexing} it along $\pi$—that is, by interpreting it relative to a context extended with an additional dummy variable $x : X$.

As discussed in Section~\ref{subsub:the-general-case-of-quantification}, the base change functor $\pi^*$ admits both a left and a right adjoint:
\[
\Sigma_\pi \dashv \pi^* \dashv \Pi_\pi.
\]
These adjoints correspond, respectively, to existential and universal quantification in dependent type theory.

The definitions of these functors are as follows. The functor $\Sigma_\pi$ sends a predicate $\psi : B \to X \times Y$ to the predicate given by composition with $\pi$, i.e.\ $\pi \circ \psi : B \to Y$:
\[
\begin{tikzcd}
	B && {X \times Y} \\
	\\
	&& Y
	\arrow["\psi", from=1-1, to=1-3]
	\arrow["{\pi \ \circ \ \psi}"', from=1-1, to=3-3]
	\arrow["\pi", from=1-3, to=3-3]
\end{tikzcd}
\]

This functor corresponds to existential quantification. The categorical semantics of the typing judgment
\[
y : Y \;\vdash\; \exists x : X.\, \psi(x, y)  : \mathsf{Type}
\]
is the morphism $\pi \circ \psi : B \to Y$, whose fiber over a term $y : Y$ is the dependent pair type
\[
\sum_{x : X} \psi(x, y),
\]
consisting of pairs $(x, p)$ where $x : X$ and $p$ is a proof of $\psi(x, y)$. If this dependent sum type is \emph{inhabited}—that is, if there exists a term of it—then there exists an $x$ such that $\psi(x, y)$ holds, precisely capturing the semantics of existential quantification.

We now turn to the right adjoint $ \Pi_\pi $. This functor sends a predicate  
\[
\psi : B \longrightarrow X \times Y
\]
to a new predicate  
\[
\Pi_\pi(\psi) : \Pi_\pi B \longrightarrow Y
\]
whose fiber over a term $ y : Y $ is the dependent product
\[
\prod_{x : X} \psi(x, y),
\]
the type of families of proofs $(p_x)_{x \in X}$, one for each $x$, with $p_x$ a proof of $\psi(x, y)$. 

This construction captures the semantics of universal quantification. The interpretation of the typing judgment
\[
y : Y \;\vdash\; \forall x : X.\, \psi(x, y) : \mathsf{Type}
\]
is precisely the morphism $ \Pi_\pi(\psi) : \Pi_\pi B \to Y $, whose fiber over each $ y : Y $ is the dependent product above. If this fiber is inhabited—i.e.\ if a term of this dependent product type exists—it provides a proof of $ \psi(x, y) $ for \emph{every} $ x : X $, precisely as required by universal quantification.

The dependent product can also be understood as the object of sections of the bundle 
\[
b : \sum_{x : X} \psi(x, y) \longrightarrow X,
\]
where each element of the fiber over $x$ is a proof of $ \psi(x, y) $. Concretely, a section 
\[
s : X \longrightarrow \sum_{x : X} \psi(x, y)
\]
is a morphism such that the following diagram commutes:
\[
\begin{tikzcd}
	X && {\sum\limits_{x : X} \psi(x,y)} \\
	\\
	&& X
	\arrow["s", from=1-1, to=1-3]
	\arrow["{\mathrm{Id}_X}"', from=1-1, to=3-3]
	\arrow["b", from=1-3, to=3-3]
\end{tikzcd}
\]
Such a section assigns to each $x : X$ a proof of $\psi(x, y)$, thereby exhibiting the dependent product as the object of global sections of this family. In particular, the type of global sections 
\[
\Gamma\!\left(\sum_{x : X} \psi(x, y)\right)
\]
is canonically isomorphic to the dependent product:
\[
\prod_{x : X} \psi(x, y) \;\cong\; \Gamma\!\left(\sum_{x : X} \psi(x, y)\right).
\]

We close this appendix with some examples. 

\begin{example}[Proof that a number is not prime]
As discussed in Section~\ref{sub:from-truth-values-to-types}, the proposition ``$x$ is not prime'' can be proved by exhibiting natural numbers $n$ and $m$, each greater than $1$, such that $nm = x$. Accordingly, the type corresponding to the proposition $\mathsf{notPrime}(x)$ is
\[
\mathsf{notPrime}(x) = \sum_{n : \mathbb{N}_{>1}} \sum_{m : \mathbb{N}_{>1}} (\, nm = x \,).
\]
A term of this type is a triple $(n, m, r)$ where $n, m : \mathbb{N}_{>1}$ and $r$ is a proof that $nm = x$. Thus, the type $\mathsf{notPrime}(x)$ classifies all possible proofs of the proposition ``$x$ is not prime,'' rather than merely assigning a truth value.
\end{example}

\begin{example}[Proof that a pitch-class set is all-interval]
\label{ex:proof-all-interval}
For a more musical example, consider the proposition that a pitch-class set $C$ is \emph{all-interval}, i.e.\ that it contains every possible interval class in a pitch-class universe $\mathsf{PC}$.

We begin by endowing the pitch-class type $\mathsf{PC}$ with the structure of a finite cyclic group of order $n$ under mod-$n$ addition:
\[
+ : \mathsf{PC} \times \mathsf{PC} \longrightarrow \mathsf{PC}.
\]
This operation admits an additive identity $0$ and inverses $-x$ for each $x : \mathsf{PC}$. In standard pitch-class theory, this simply reflects the fact that pitch classes form a copy of the cyclic group $\mathbb{Z}_n$.

Given this structure, we define the \emph{pitch-class interval} function
\[
\begin{matrix}
\mathsf{pcint} : & \mathsf{PC} \times \mathsf{PC} &  \longrightarrow &  \mathsf{IVLS} \\
& (x, y) & \longmapsto & y - x
\end{matrix}
\]
where $\mathsf{IVLS}$ denotes the type of pitch-class intervals. 

Because inversion identifies an interval with its additive inverse, we pass to the quotient type of \emph{interval classes} $\mathsf{IC}$, with
\[
\mathsf{intclass} : \mathsf{IVLS} \longrightarrow \mathsf{IC}
\]
the canonical epimorphism. The type $\mathsf{IC}$ contains $\lfloor n/2 \rfloor + 1$ elements, including the trivial class $0$.

A pitch-class set is a subset of $\mathsf{PC}$, hence the type of all such sets is
\[
\mathsf{PCSets} \coloneqq P\mathsf{PC},
\]
where $P$ is the power-type operator.\footnote{Formally, $P\mathsf{PC}$ is equivalent to the function type $\mathsf{PC} \to \mathsf{Prop}$, whose elements correspond to predicates (or subtypes) over $\mathsf{PC}$.} Each element $C : \mathsf{PCSets}$ may therefore be regarded as a subtype of $\mathsf{PC}$.

To prove that a chord $C : \mathsf{PCSets}$ is all-interval is to prove that for every interval class $i : \mathsf{IC}$, there exist pitch classes $x, y : C$ such that $\mathsf{intclass}(\mathsf{pcint}(x, y)) = i$. In type-theoretic terms, a proof is therefore a term of the dependent type
\[
\mathsf{allInterval}(C) \coloneqq \prod_{i : \mathsf{IC}} \sum_{(x, y) : C \times C} \mathsf{intclass}(\mathsf{pcint}(x, y)) = i.
\]

The inner dependent sum
\[
\sum_{(x, y) : C \times C} \mathsf{intclass}(\mathsf{pcint}(x, y)) = i
\]
consists of all pairs $((x, y), p_i)$, where $(x, y) : C \times C$ and $p_i$ is a proof that $\mathsf{intclass}(\mathsf{pcint}(x, y)) = i$. The dependent product then ranges over all interval classes. If, for some $i$, this sum type is empty, the dependent product is uninhabited, and hence no proof of $\mathsf{allInterval}(C)$ exists—meaning that $C$ is not all-interval. 

The general predicate expressing that a chord is all-interval can thus be written as the typing judgment
\[
C : \mathsf{PCSets} \;\vdash\; \mathsf{allInterval}(C) : \mathsf{Type},
\]
whose semantic interpretation is a morphism 
\[
\llbracket \mathsf{allInterval} \rrbracket \longrightarrow \llbracket \mathsf{PCSets} \rrbracket
\]
in the slice category $\mathbb{B}/\llbracket \mathsf{PCSets} \rrbracket$.

\end{example}

\begin{example}[Proof that a dominant-function chord contains the leading tone]
\label{ex:proof-domfunc-leading-tone}
This example illustrates the versatility of the dependent type-theoretic framework by showing how dependent types may themselves vary over terms of other dependent types. We construct the type corresponding to the proposition that, for a given key $k : \mathsf{Key}$, any chord $c : \mathsf{Chord}$ with dominant function in $k$ contains the leading tone of $k$:\footnote{This example has a similar syntactic form to the canonical ``donkey sentences,'' such as ``John beats every donkey he owns''; see \cite{corfield2020modal}, \cite[597]{jacobs1999categorical}, \cite{sundholm1989constructive}, and \cite{sundholm1986proof}.}
\[
\prod_{x : \sum_{c : \mathsf{Chord}} \mathsf{domfunc}(c, k)} 
   \mathsf{contains}\!\left(\mathsf{pr}_1(x), \mathsf{lt}(k)\right)
\]

Let us unpack this expression. The dependent sum
\[
\sum_{c : \mathsf{Chord}} \mathsf{domfunc}(c, k)
\]
consists of pairs $(c, p)$ where $c$ is a chord and $p$ is a proof that $c$ has dominant function in $k$. The dependent product then quantifies over all such pairs, forming the type of proofs that the chord $c = \mathsf{pr}_1(x)$ contains the leading tone $\mathsf{lt}(k)$ of $k$.

If this dependent product type is inhabited (i.e.\ non-empty), then the proposition holds: every chord with dominant function in $k$ indeed contains the leading tone of $k$, as is the case in tonal harmony.

Now consider a broader class of scales $\mathsf{Scale}$, some of which contain no chords with dominant function. Let $s : \mathsf{Scale}$ be one such scale. In such a case, the dependent sum is empty:
\[
\sum_{c : \mathsf{Chord}} \mathsf{domfunc}(c, s) = \varnothing.
\]
When the domain of quantification is empty, the dependent product over it is the terminal type $\mathbf{1}$, representing a proposition that is trivially true. Thus, for any scale $s$ in which there are no dominant-function chords, the proposition 
\[
\text{``Any chord $c$ in $s$ that has dominant function contains the leading tone of $s$''} 
\]
is vacuously true and is represented by the type $\mathbf{1}$, the unique type with exactly one proof.
\end{example}

\section{$\mathsf{W}$-Types}
\label{sec:w-types}

In this appendix we introduce $\mathsf{W}$-types, which describe inductive types whose terms are well-founded trees.\footnote{See \cite{martinlof1979,martin2021intuitionistic,hott2013} for foundational treatments. For the categorical semantics of $\mathsf{W}$-types as initial algebras of polynomial endofunctors, see \cite{gambino2013polynomial,moerdijk2000wellfounded}. We omit this material here in the interest of space.} A $\mathsf{W}$-type is determined by two pieces of data: a type $A : \mathsf{Type}$ and a type family $B : A \to \mathsf{Type}$. Each element $a : A$ serves as a label for a node in the tree, while the type $B(a)$ specifies the arity of that label—that is, the quantity of branches emanating from a node labeled by $a$.

To specify the subtrees attached to a node labeled by $a$, we provide a function
\[
f : B(a) \longrightarrow \mathsf{W}_{x : A} B(x),
\]
which assigns to each branch $b : B(a)$ a subtree of type $\mathsf{W}_{x : A} B(x)$. Thus, a term of the $\mathsf{W}$-type consists of a label together with a family of subtrees indexed by its arity.

As a simple example, suppose we wish to define the type of lists $\mathsf{List}(A)$ whose elements are drawn from a type $A$. Such a list type can be presented as a $\mathsf{W}$-type with $\mathbf{1} + A$ labels: one nullary label corresponding to the empty list, and one label for each $a : A$ corresponding to adding $a$ to the head of a list.\footnote{See {\cite[155]{hott2013}.}}

To construct a list over $A$ consists in repeatedly choosing elements to append to the list, terminating after a finite sequence. In terms of the associated $\mathsf{W}$-type, a single step of this construction is represented by a dependent pair
\[
\sum_{u : \mathbf{1} + A} \bigl(B(u) \longrightarrow \mathsf{W}_{x : \mathbf{1} + A} B(x)\bigr),
\]
whose elements are pairs $(u, f)$ with $u : \mathbf{1} + A$ and $f$ a function assigning to each branch $b : B(u)$ a subtree (in this case, a sublist). Since $B(\mathsf{inl}(\star)) = \mathbf{0}$ and $B(\mathsf{inr}(a)) = \mathbf{1}$, this means:

\begin{itemize}
  \item If $u = \mathsf{inl}(\star)$, there are no branches, and hence no elements to append; the list terminates here.
  \item If $u = \mathsf{inr}(a)$, there is a single branch $\star : B(u)$, and $f(\star)$ is the tail of the list.
\end{itemize}

A term of the list type is obtained by applying the constructor
\[
\mathsf{sup} : \left( \sum_{u : \mathbf{1} + A} \left(B(u) \longrightarrow \mathsf{W}_{x : \mathbf{1} + A} B(x)\right) \right) \longrightarrow \mathsf{W}_{x : \mathbf{1} + A} B(x).
\]

For example:
\begin{itemize}
  \item The pair $(\mathsf{inl}(\star), !)$, where $! : \mathbf{0} \to \mathsf{W}_{x : \mathbf{1} + A} B(x)$ is the unique function, gives
  \[
  \mathsf{sup}(\mathsf{inl}(\star), !) : \mathsf{W}_{x : \mathbf{1} + A} B(x),
  \]
  which represents the empty list.
  
  \item A pair $(\mathsf{inr}(a), f)$, where $f : \mathbf{1} \to \mathsf{W}_{x : \mathbf{1} + A} B(x)$, gives a list whose head is $a$ and whose tail is $f(\star)$. If $f(\star)$ is itself of the form $\mathsf{sup}(\mathsf{inr}(b), f_b)$, then the resulting list has head $a$ and next element $b$, with tail $f_b(\star)$, and so on.
\end{itemize}

Unfolding this inductive construction produces a finite sequence of nested $\mathsf{sup}$-applications that eventually terminates in the empty list. For example,
\[
\mathsf{sup}\!\left(
  \mathsf{inr}(a), \lambda\star.\,
    \mathsf{sup}\!\left(
      \mathsf{inr}(b), \lambda\star.\,
        \mathsf{sup}\!\left(
          \mathsf{inr}(c), \lambda\star.\,
            \mathsf{sup}(\mathsf{inl}(\star), !)
        \right)
    \right)
\right)
\]
is a term of $\mathsf{W}_{x : \mathbf{1} + A} B(x)$ in which each $\lambda$-abstraction denotes the required function of type $\mathbf{1} \to \mathsf{W}_{x : \mathbf{1} + A} B(x)$. This term represents the list whose head is $a$, whose next element is $b$, whose next element is $c$, and whose tail is the empty list: in other words, the list $[a, b, c]$. This inductive structure can also be visualized as a tree, with each successive element corresponding to a subtree (see Figure~\ref{fig:inductive-tree}).

\begin{figure}[ht]
  \centering
  \[
  \begin{tikzcd}
    {\mathsf{sup}\!\left(   \mathsf{inr}(a),\    \lambda\star.\,     \mathsf{sup}\!\left(       \mathsf{inr}(b),\        \lambda\star.\,         \mathsf{sup}\!\left(           \mathsf{inr}(c),\            \lambda\star.\,             \mathsf{sup}\!\left(               \mathsf{inl}(\star),\                !             \right)         \right)     \right) \right)} &&& a \\
    {\mathsf{sup}\!\left(   \mathsf{inr}(b),\    \lambda\star.\,     \mathsf{sup}\!\left(       \mathsf{inr}(c),\        \lambda\star.\,         \mathsf{sup}\!\left(           \mathsf{inl}(\star),\            !         \right)     \right) \right)} &&& b \\
    {\mathsf{sup}\!\left(   \mathsf{inr}(c),\    \lambda\star.\,     \mathsf{sup}\!\left(       \mathsf{inl}(\star),\        !     \right) \right)} &&& c \\
    {\mathsf{sup}(\mathsf{inl}(\star), !)}
    \arrow[equals, from=1-1, to=1-4]
    \arrow[from=1-1, to=2-1]
    \arrow[from=1-4, to=2-4]
    \arrow[equals, from=2-1, to=2-4]
    \arrow[from=2-1, to=3-1]
    \arrow[from=2-4, to=3-4]
    \arrow[equals, from=3-1, to=3-4]
    \arrow[from=3-1, to=4-1]
  \end{tikzcd}
  \]
  \caption{The inductive structure of the list $[a, b, c]$ represented as nested $\mathsf{sup}$-applications, where each subtree corresponds to the tail of the list above it.}
  \label{fig:inductive-tree}
\end{figure}
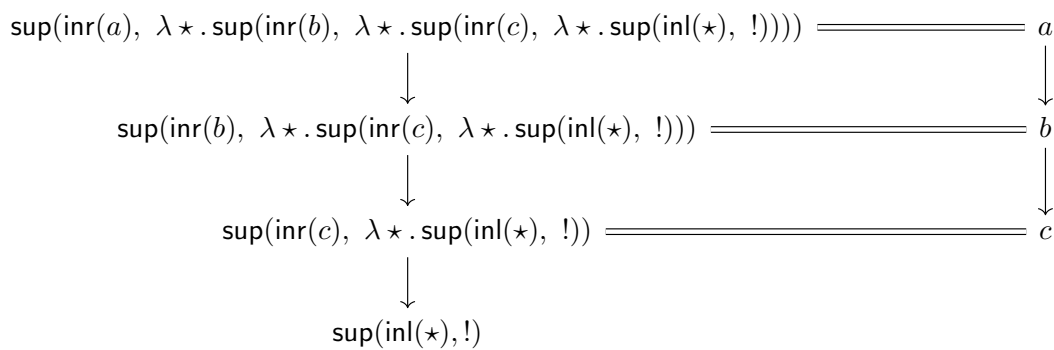

\begin{example}[Rhythm trees]
\label{ex:rhythm-trees}
A \emph{rhythm tree} is a rooted tree structure in which each node is labeled by a rational number representing a duration. The branches of a node correspond to proportional factors that subdivide that duration. These proportional values need not sum to the duration of the parent node: they simply express relative subdivisions. For example, a duration of $5$ can be divided proportionally into $1, 1, 1$, which indicates three equal parts.

We now show how this structure can be encoded as a $\mathsf{W}$-type. To do so, we  specify a type $A$ of node labels and a type family $B : A \to \mathsf{Type}$ describing the arities of those nodes.

Let $\mathbb{Q}$ denote the type of rational numbers. Any rational $d : \mathbb{Q}$ may be paired with a finite list of rational factors representing its proportional branches. We therefore define
\[
A \coloneqq \mathbb{Q} \times \mathsf{List}(\mathbb{Q}),
\]
whose elements are pairs $(d, [r_1, \ldots, r_n])$, where $[r_1, \ldots, r_n]$ is a list of $n$ rational proportional factors associated with $d$. These factors specify how $d$ is divided proportionally.

Next, we define the arity family
\[
\begin{matrix}
B : & A & \longrightarrow & \mathsf{Type} \\
& (d, [r_1, \ldots, r_n]) & \longmapsto & \mathsf{Fin}(n)
\end{matrix}
\]
which maps each pair $(d, [r_1, \ldots, r_n])$ to the finite type $\mathsf{Fin}(n)$ of positions $0, \dots, n-1$. Each element $k : \mathsf{Fin}(n)$ corresponds to the subtree attached to the branch labeled $r_k$.

A term of the resulting $\mathsf{W}$-type is then constructed by
\[
\mathsf{sup}((d, [r_1, \ldots, r_n]), f)
\]
where
\[
f : \mathsf{Fin}(n) \longrightarrow \mathsf{W}_{x : A} B(x)
\]
assigns to each branch position $k : \mathsf{Fin}(n)$ a subtree $f(k)$, itself built inductively in the same manner. In particular, each subtree takes the form
\[
f(k) = \mathsf{sup}((d', [s_1, \ldots, s_m]), g)
\]
for some $d' : \mathbb{Q}$, list $[s_1, \dots, s_m] : \mathsf{List}(\mathbb{Q})$ of proportional factors, and function
\[
g : \mathsf{Fin}(m) \longrightarrow \mathsf{W}_{x : A} B(x).
\]

Thus a rhythm tree is built inductively by repeatedly choosing a rational duration, specifying a finite list of proportional factors, and attaching recursively constructed rhythm trees to each factor (see Figure~\ref{fig:rhythm-tree}).
\begin{figure}[ht]
  \centering
  \[
  \begin{tikzcd}[column sep=1.5em, row sep=2em]
    &&& 9.5 \\
    2 &&& 2.5 &&& 3 \\
    && 1 & 1 & 1 & 1.5 && 2 \\
    & 2 & 1
    \arrow[from=1-4, to=2-1]
    \arrow[from=1-4, to=2-4]
    \arrow[from=1-4, to=2-7]
    \arrow[from=2-4, to=3-3]
    \arrow[from=2-4, to=3-4]
    \arrow[from=2-4, to=3-5]
    \arrow[from=2-7, to=3-6]
    \arrow[from=2-7, to=3-8]
    \arrow[from=3-3, to=4-2]
    \arrow[from=3-3, to=4-3]
  \end{tikzcd}
  \]
  \caption{A rhythm tree with root duration $9.5$ subdivided into proportional branches. Each branching node represents a rational proportional factor, which may itself branch further.}
  \label{fig:rhythm-tree}
\end{figure}
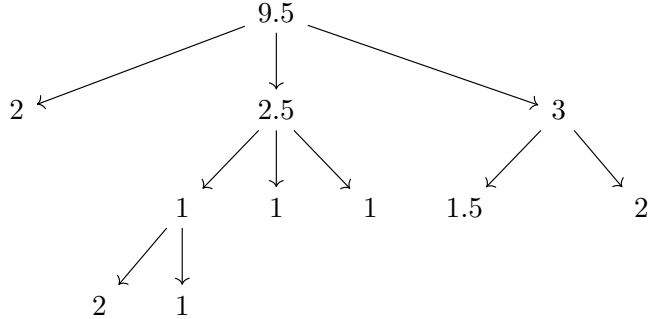
\end{example}

\end{document}